\newtheorem{theorem}{Theorem}[section]
\newtheorem{lemma}[theorem]{Lemma}
\newtheorem{prop}[theorem]{Proposition}
\newtheorem{coro}[theorem]{Corollary}
\theoremstyle{definition}
\newtheorem{definition}[theorem]{Definition}
\newtheorem{defi}[theorem]{Definition}
 \theoremstyle{plain}
\newtheorem*{namedthm}{\namedthmname}
\newcounter{namedthm}
\newenvironment{named}[1]
  {\def\namedthmname{#1}%
   \refstepcounter{namedthm}%
   \namedthm\def\@currentlabel{#1}}
  {\endnamedthm}
\newcommand{\RR}{\mathbb{R}}
\newcommand{\R}{\mathbb{R}}
\newcommand{\CC}{\mathbb{C}}
\newcommand{\NN}{\mathbb{N}}
\newcommand{\PSH}{{\rm PSH}}
\newcommand{\setdef}{\; ; \; }
\newcommand{\loc}{{\rm loc}}
\newcommand{\f}{\varphi}
\newcommand{\e}{\varepsilon}
 \numberwithin{equation}{section}
\author{Vincent Guedj}
\address{Vincent Guedj, Institut de Math\'ematiques de Toulouse  \\ 
Universit\'e de Toulouse, CNRS \\
UPS, 118 route de Narbonne \\
31062 Toulouse cedex 09, France}
\email{\href{mailto:vincent.guedj@math.univ-toulouse.fr}{vincent.guedj@math.univ-toulouse.fr}}
\urladdr{\href{https://www.math.univ-toulouse.fr/~guedj}{https://www.math.univ-toulouse.fr/~guedj/}}
\author{Chinh H. Lu}
\address{Chinh H. Lu, Laboratoire de Math\'ematiques d'Orsay,
 Univ. Paris-Sud,
 CNRS, Universit\'e Paris-Saclay,
  91405 Orsay, France}
\email{\href{mailto:hoang-chinh.lu@math.u-psud.fr}{hoang-chinh.lu@math.u-psud.fr}}
\urladdr{\href{https://www.math.u-psud.fr/~lu/}{https://www.math.u-psud.fr/~lu/}}
\author{Ahmed Zeriahi}
\address{Ahmed Zeriahi, Institut de Math\'ematiques de Toulouse,   \\ Universit\'e de Toulouse, CNRS \\
UPS, 118 route de Narbonne \\
31062 Toulouse cedex 09, France}
\email{\href{mailto:ahmed.zeriahi@math.univ-toulouse.fr}{ahmed.zeriahi@math.univ-toulouse.fr}}
\urladdr{\href{https://www.math.univ-toulouse.fr/~zeriahi/}{https://www.math.univ-toulouse.fr/~zeriahi/}}
\thanks{The authors are partially supported by the ANR project GRACK}
\keywords{Parabolic Monge-Amp\`ere equation,  pluripotential solution, viscosity solution, Perron envelope}
\subjclass[2010]{53C44, 32W20, 58J35}
\begin{document}

  \title[Complex Monge-Amp\`ere flows]{Pluripotential solutions versus viscosity solutions to complex Monge-Amp\`ere flows}

\setcounter{tocdepth}{1}

 \date{\today}

 \begin{abstract}  
 
We compare various notions of weak subsolutions to degenerate complex Monge-Amp\`ere flows,
showing that they all coincide. This allows us to show  that the viscosity solution coincides with the envelope of pluripotential subsolutions.
\medskip

\noindent {\it Dedicated to Duong Hong Phong on the occasion of his 65th birthday.}
\end{abstract}

 \maketitle

\tableofcontents

\section{Introduction} 

A viscosity approach for parabolic complex Monge-Amp\`ere equations (both in local and global contexts) has been developed in \cite{EGZ15,EGZ16,EGZ18, DLT19}, while  a pluripotential approach has been developed in \cite{GLZ1,GLZ2},  which allows to solve these equations with quite degenerate data. 
The goal of this paper is to compare these two notions, extending
the dictionary established in the elliptic case
(see \cite{EGZ11,HL13,GLZ17}).

Let $\Omega$ be a smooth bounded strictly pseudoconvex domain of $\CC^n$.
We consider  the parabolic complex Monge-Amp\`ere flow in $\Omega_T$
\begin{equation} \label{eq: CMAF}
(dd^c \varphi_t)^n =e^{\dot{\varphi}_t+F(t,z,\f)} g(z) dV(z).
\end{equation}
Here 
\begin{itemize}
\item $T>0$ and $\Omega_T=]0,T[ \times \Omega$ with parabolic boundary $$\partial_0 \Omega_T:= \{0\}\times \Omega \cup [0,T[\times \partial \Omega.$$  
\item $F: [0,T]\times \Omega \times \mathbb{R} \rightarrow \mathbb{R}$ is a continuous function;
\item $dV$ denotes the euclidean volume form in $\CC^n$;
\item $0\leq g$ is a continuous function on $\Omega$;   
\item $(t,x) \mapsto \f(t,x)=\f_t(x)$ is the unknown function and
$\dot{\varphi}_t=\partial_t \f$ denotes the time derivative of $\f$.
\end{itemize}
We assume throughout this article that $h: \partial_0 \Omega_T \rightarrow \mathbb{R}$ is a continuous Cauchy-Dirichlet boundary data, i.e. 
\begin{itemize}
\item $h$ is continuous on $\partial_0 \Omega_T$,  and
\item  $h_0$ is  a continuous plurisubharmonic function in $\Omega$. 
\end{itemize}

	\medskip
	
We first extend the definition of pluripotential subsolutions proposed in \cite{GLZ1}. This new definition 
applies to functions which are not necessarily locally Lipschitz in $t$, it thus allows us to consider 
\eqref{eq: CMAF} for less regular data.

We then show that   these pluripotential parabolic  subsolutions coincide with  viscosity subsolutions:

\begin{named}{Theorem A} \label{thmA}
	Assume $\varphi \in \mathcal{P}  (\Omega_T)$.
The following are equivalent:
 
 \smallskip
 
 (i) $\f$ is a viscosity subsolution to \eqref{eq: CMAF};
 
 \smallskip
 
 (ii) $\f$ is a pluripotential subsolution to \eqref{eq: CMAF}.
\end{named}

Here $  \mathcal{P}  (\Omega_T)$ denotes the set of parabolic potentials, i.e.
locally integrable upper semi-continuous functions $\f$ in $\Omega_T$
whose slices $\f_t=\f(t,\cdot)$ are plurisubharmonic in $\Omega$.

\smallskip

The pluripotential parabolic comparison principle \cite[Theorem 6.5]{GLZ1}
then allows us to  conclude that the envelope of pluripotential subsolutions
is the unique viscosity solution to \eqref{eq: CMAF} :

\begin{named}{Theorem B}\label{thmB}
	Assume that $g>0$ is positive almost everywhere in $\Omega$. 
	Then there is a unique viscosity solution to \eqref{eq: CMAF} with boundary value $h$  which coincides with the envelope of all pluripotential subsolutions. 
\end{named}

The techniques developed in the local context allow us to obtain analogous results in the compact setting, comparing viscosity and pluripotential notions  for complex Monge-Amp\`ere flows
that contain the K\"ahler-Ricci flow as a particular case. These are briefly discussed in Section \ref{sect: compact case}.
 
 \medskip

 \noindent {\bf Acknowledgement.} We thank the referee for useful comments which improve the presentation of the paper. 


  \section{Pluripotential subsolutions}  \label{sec:prelim}
    
      
  Let $\Omega$ be a smoothly bounded strongly pseudoconvex domain in $\CC^n$. By this we mean there exists a smooth strictly 
  plurisubharmonic  function $\rho$ in an open neighborhood of $\bar{\Omega}$ 
  such that $\Omega= \{\rho<0\}$ and $d\rho \neq 0$ on $\partial \Omega$.


 \begin{defi} \label{defi: parabolic potential}
The set of parabolic potentials  $ \mathcal P (\Omega_T)$ consists of upper semicontinuous functions  
$ u : \Omega_T :=  ]0,T[ \times  {\Omega} \longrightarrow [- \infty , + \infty[$  such that
$u \in L^1_{\loc } (\Omega_T)$ and
$\forall t  \in ]0,T[$, the slice $u_t : z \mapsto u (t,z)$ is plurisubharmonic in $\Omega$. 
\end{defi}

Let us stress that -by comparison with \cite{GLZ1}- we do not assume here that 
the family $\{u(\cdot,z) \setdef z \in \Omega\}$ is locally  uniformly  Lipschitz in $]0,T[$. We nevertheless use the same 
notation $ \mathcal P (\Omega_T)$ for the set of parabolic potentials, hoping that no confusion will arise.

A pluripotential subsolution is a parabolic potential  $\f$ that satisfies
$$
(dd^c \varphi)^n \wedge dt \geq e^{\dot{\varphi}_t+F(t,z,\f)} g(z) dV(z) \wedge dt
$$
in the weak sense of (positive) measures in $\Omega_T$.

We need to make sense of all these quantities. The LHS is defined as in \cite{GLZ1} by
using Bedford-Taylor's theory, the novelty here concerns mainly the RHS
as we explain hereafter.

\subsection{Defining the LHS}

The LHS can be defined by using Bedford-Taylor theory:

\begin{lemma}
If $u \in \mathcal{P}(\Omega_T) \cap L^{\infty}_{\loc} (\Omega_T)$ then $dt \wedge (dd^c u_t)^n$ is well-defined as a positive Borel measure in $\Omega_T$. 
\end{lemma}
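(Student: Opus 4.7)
The plan is to define the measure via the Riesz representation theorem. For each $t \in {]0,T[}$, the slice $u_t$ is bounded and plurisubharmonic on $\Omega$, so Bedford--Taylor theory produces a well-defined positive Radon measure $(dd^c u_t)^n$ on $\Omega$. I would then set
\[
\Lambda(\chi) := \int_0^T \left( \int_\Omega \chi(t,z)\, (dd^c u_t)^n(z) \right) dt, \qquad \chi \in C_c^0(\Omega_T),
\]
and check that $\Lambda$ is a well-defined, positive, continuous linear functional on $C_c^0(\Omega_T)$; the Riesz representation theorem then yields the desired positive Borel measure on $\Omega_T$.

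The main technical point is Borel measurability in $t$ of $I_\chi(t) := \int_\Omega \chi(t,\cdot)\, (dd^c u_t)^n$ for each fixed $\chi \in C_c^0(\Omega_T)$. I plan to approximate $u$ by regularization in the space variable alone: fix a standard nonnegative mollifier $(\rho_\varepsilon)$ on $\CC^n$ and, on the interior subdomain $\Omega_\varepsilon := \{z \in \Omega \setdef \mathrm{dist}(z,\partial \Omega) > \varepsilon\}$, set
\[
v_\varepsilon(t,z) := \int_{\CC^n} u(t, z - w) \rho_\varepsilon(w)\, dw.
\]
By Fubini, $v_\varepsilon$ is jointly Borel measurable and locally bounded; for each $t$ the slice $v_\varepsilon(t,\cdot)$ is smooth and plurisubharmonic on $\Omega_\varepsilon$ and decreases pointwise to $u_t$ as $\varepsilon \downarrow 0$ by the standard psh regularization. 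For smooth slices the coefficients of $(dd^c v_\varepsilon(t,\cdot))^n$ are polynomials in the spatial derivatives of $v_\varepsilon$ at $t$, hence Borel in $t$, so $t \mapsto \int \chi(t,\cdot)\, (dd^c v_\varepsilon(t,\cdot))^n$ is Borel for $\varepsilon$ small enough that $\mathrm{supp}\,\chi \subset {]0,T[} \times \Omega_\varepsilon$. The Bedford--Taylor continuity theorem for decreasing sequences of bounded psh functions, applied slicewise, then gives pointwise convergence of these integrals to $I_\chi(t)$ in $t$, so $I_\chi$ is Borel measurable as a pointwise limit of Borel functions.

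Finally, the Chern--Levine--Nirenberg inequalities bound $\|(dd^c u_t)^n\|_K$ uniformly in $t$ on each compact $K \Subset \Omega$ by a constant depending only on $K$ and $\|u\|_{L^\infty_{\loc}}$, so $I_\chi$ is locally bounded and $\Lambda$ is a continuous positive linear functional on $C_c^0(\Omega_T)$; Riesz representation concludes. The main obstacle I expect is the measurability step: combining joint Borel regularity of the spatial regularization $v_\varepsilon$ with slicewise Bedford--Taylor continuity, and choosing the interior subdomains $\Omega_\varepsilon$ so as to exhaust $\Omega$, requires a little care but no new ideas beyond the Bedford--Taylor toolkit.
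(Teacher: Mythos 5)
Your proof is correct, but it regularizes in a different variable than the paper does. The paper's proof fixes a test function and smooths $u$ in the \emph{time} variable by sup-convolution, $u^j(t,z)=\sup_s\{u(s,z)-j^2(t-s)^2\}$; since $t\mapsto u^j_t$ is then continuous, \cite[Lemma 2.1]{GLZ1} gives continuity of $t\mapsto\int_\Omega\chi(t,\cdot)(dd^cu^j_t)^n$, and Bedford--Taylor convergence for the decreasing limit $u^j\downarrow u$ exhibits $I_\chi$ as a pointwise limit of continuous functions. You instead smooth in the \emph{space} variable, so that each slice measure $(dd^cv_\varepsilon(t,\cdot))^n$ has an explicit density polynomial in the spatial derivatives of $v_\varepsilon$, whose Borel dependence on $t$ is elementary via Fubini; Bedford--Taylor then handles the decreasing limit $v_\varepsilon\downarrow u_t$ slicewise. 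Both routes end with the Chern--Levine--Nirenberg bound and the same conclusion. What your version buys is independence from \cite[Lemma 2.1]{GLZ1} (continuity of Monge--Amp\`ere masses under continuous variation of the potential), at the cost of a slightly more hands-on measurability argument; the paper's version is shorter given that lemma, and its time-regularization is the one reused throughout the rest of the article. Two small points you should make explicit: the monotonicity of $v_\varepsilon(t,\cdot)$ in $\varepsilon$ requires the mollifier to be radial (this is what ``the standard psh regularization'' hides), and the Borel-in-$t$ dependence of the spatial derivatives $\partial_z^\alpha v_\varepsilon=u_t*\partial^\alpha\rho_\varepsilon$ should be justified by the same Fubini argument you invoke for $v_\varepsilon$ itself. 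Neither is a gap.
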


\begin{proof}
Fix $\chi$ a test function in $\Omega_T$ with support contained in $J \times D \Subset \Omega_T$. We regularize $u$ by taking sup convolution: 
for $\ (t,z) \in J \times D$ we set
$$
u^{j}(t,z) : = \sup \{ u(s,z) - j^2(t-s)^2 \setdef s \in ]0,T[  \, \} . 
$$

The functions $u^j$ decrease pointwise to $u$ on $J \times D$ (by upper semi-continuity of $u$). 
Since $t \mapsto u^j$ is continuous,
it follows from \cite[Lemma 2.1]{GLZ1} 
that the function 
$$
t\mapsto \int_{\Omega} \chi(t,z) (dd^c u^j_t)^n 
$$
is continuous in $t$. It follows  from  \cite{BT82} that
$$
\lim_{j\to +\infty} \int_{\Omega} \chi(t,z) (dd^c u^j_t)^n  =  \int_{\Omega }\chi(t,z) (dd^c u_t)^n. 
$$
Taking  limits as $j\to +\infty$ we obtain that $t\mapsto \int_{\Omega }\chi(t,z) (dd^c u_t)^n$ is a bounded Borel measurable function in $]0,T[$. 
The Chern-Levine-Nirenberg  inequalities yield
$$
\left | \int_{\Omega_T} \chi(t,z) dt \wedge (dd^c u_t)^n \right | \leq  C(J,D,u)  \sup_{\Omega_T}| \chi |, 
$$
where $C(J,D,u)>0$ is a constant. It thus follows that the distribution $dt \wedge (dd^c u_t)^n$ extends as a positive Borel measure in $\Omega_T$. 
\end{proof}
   
%

\subsection{Defining the RHS}

For each $u\in \mathcal{P}(\Omega_T)$,  we define $g \partial_t u$ as a distribution on $\Omega_T$ 
by setting
$$
\langle g \partial_t u , \chi \rangle := -\int_{\Omega} \int_0^T \partial_t \chi (t,z) u(t,z) g(z) dt dz,
$$
for all test functions $\chi \in \mathcal{C}^{\infty}(\Omega_T)$ with compact support.

We now wish to interpret the RHS as a supremum of (signed) Radon measures, setting
$$
 e^{\dot{\varphi}_t+F(t,z,\f)} g 
 =g \sup_{a>0} \left\{
  a (\partial_t \varphi + F(t,z,\varphi_t(z)) - a\log a +a
 \right\}.
$$
This relies on
the following observation:

\begin{lemma}\label{lem: new def}
Let $T$ be a positive measure in an open set $D\subset \mathbb{R}^N$, 
$f$ a bounded measurable function on $D$, and $0\leq g\in L^p(D)$. If, for all $a>0$,
$$
T \geq g( a f + a -a\log a) \lambda_N, 
$$ 
in the sense of measures, then $T\geq e^f g$ in the sense of measures in $D$. 
\end{lemma}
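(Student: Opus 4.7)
The plan hinges on the elementary identity
$$ e^f \;=\; \sup_{a > 0} \bigl( af + a - a \log a \bigr), $$
verified by calculus: the function $\psi(a) := af + a - a \log a$ has derivative $f - \log a$, so its unique critical point is $a = e^f$, at which $\psi(e^f) = e^f$. This identity makes the conclusion plausible; the subtlety is that the hypothesis is indexed pointwise in $a$, whereas the target inequality would correspond to choosing $a$ as the function $e^{f(x)}$, which is not directly allowed.

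To overcome this, I would localise the choice of $a$ via a Borel partition of $D$ on which $f$ is almost constant. Fix $\varepsilon > 0$ and set
$$ E_j \;:=\; \{ x \in D \setdef j\varepsilon \leq f(x) < (j+1)\varepsilon \}, \qquad j \in \mathbb{Z}. $$
Boundedness of $f$ ensures that only finitely many $E_j$ are nonempty. On each $E_j$ I would take $a_j := e^{j\varepsilon}$, so that $\log a_j = j\varepsilon$ and
$$ \psi(a_j) \;=\; e^{j\varepsilon}\bigl( f + 1 - j\varepsilon \bigr) \;\geq\; e^{j\varepsilon} \;\geq\; e^{f - \varepsilon} \quad \text{on } E_j, $$
the first inequality using $f \geq j\varepsilon$ on $E_j$ and the second using $f < (j+1)\varepsilon$.

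Next I would invoke the hypothesis with $a = a_j$. Since $T - g\psi(a_j)\lambda_N$ is a positive distribution, hence a positive Radon measure, and $g \in L^p(D) \subset L^1_{\loc}(D)$, the inequality $\int \phi\, dT \geq \int \phi\, g \psi(a_j)\, d\lambda_N$ extends from smooth nonnegative test functions to any bounded Borel test function of compact support by a standard monotone class/approximation argument. Applying this with $\phi = \chi \id_{E_j}$ for a fixed nonnegative $\chi \in \mathcal{C}^\infty_c(D)$ and summing over the finitely many nonempty $j$, I obtain
$$ \int \chi\, dT \;=\; \sum_j \int \chi \id_{E_j}\, dT \;\geq\; e^{-\varepsilon} \sum_j \int \chi \id_{E_j}\, g e^f\, d\lambda_N \;=\; e^{-\varepsilon} \int \chi\, g e^f \, d\lambda_N. $$
Letting $\varepsilon \to 0^+$ gives the desired $T \geq e^f g \lambda_N$. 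The only delicate point is the extension of the measure inequality from smooth test functions to the non--smooth $\chi \id_{E_j}$, but this is routine thanks to the positivity of $T - g \psi(a_j) \lambda_N$ together with $g \in L^1_{\loc}$; I expect no substantial obstacle.
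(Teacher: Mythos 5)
Your argument is correct, but it takes a genuinely different route from the paper's. Both proofs rest on the same Legendre-type identity $e^f=\sup_{a>0}(af+a-a\log a)$; the difference is how the pointwise supremum is extracted from a family of measure inequalities. You discretise the parameter $a$ over the level sets $E_j=\{j\varepsilon\le f<(j+1)\varepsilon\}$ of $f$ and test the positive Radon measure $T-g\,\psi(a_j)\lambda_N$ against $\chi\,\id_{E_j}$, which is legitimate exactly as you say (a positive distribution is a positive Radon measure and pairs nonnegatively with any nonnegative bounded Borel function of compact support). The paper instead argues in two steps: it first assumes $g\ge b>0$, divides by $g$ to reduce to $g\equiv 1$, mollifies $T$ so that the inequality becomes pointwise, takes the supremum over $a$ on the mollified objects, and then passes to the limit in the mollification; a second step removes the lower bound on $g$ by perturbing $T$ to $T+\varepsilon\lambda_N$ and $g$ to $g+c(\varepsilon)$, using the boundedness of $f$ to restrict $a$ to a bounded interval. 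Your decomposition buys a shorter, more self-contained proof that handles general $g\ge0$ in one stroke and needs neither mollification nor the two-step reduction; the paper's mollification scheme is the more standard device and would also work when the supremum cannot be reduced to countably many parameters. The only point worth making explicit in your write-up is to replace $f$ by a Borel representative at the outset --- harmless, since the measures $g\,\psi(a)\lambda_N$ and $e^fg\,\lambda_N$ are absolutely continuous with respect to $\lambda_N$ and so unchanged by a modification of $f$ on a Lebesgue-null set --- so that the sets $E_j$ are genuinely $T$-measurable.
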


Here $\lambda_N$ denotes the Lebesgue measure in $D$.

\begin{proof}
We first assume that $g \geq b>0$ on $D$. Replacing $T$ with $T/g $ we can assume that $g\equiv 1$. 
We regularize $T$ by using non-negative mollifiers, setting
 $T_{\varepsilon}:= T\star \rho_{\varepsilon}$. Then for all $a>0$
$$
T_{\varepsilon} \geq a f\star \rho_{\varepsilon} + a - a\log a,
$$
pointwise on $D$. Taking the supremum over $a>0$ we obtain
$$
T_{\varepsilon} \geq e^{f\star \rho_{\varepsilon}}  
$$
pointwise on $D$.
The inequality thus also holds in the sense of measures. 
Letting $\varepsilon \to 0$ yields the conclusion. 

We now remove the positivity condition on $g$. Since $f$ is bounded, for each $\varepsilon>0$ we can find $c(\varepsilon)>0, A>0$ such that, for all $a\in ]0,A[$, 
$$
T + \varepsilon \lambda_N \geq  (g+ c(\varepsilon))(a f - a \log a +a) \lambda_N, 
$$ 
 It follows from the first step and the fact that $f$ is bounded (so that the supremum can be restricted to $a\in ]0,A[$) that 
$$
T+ \varepsilon \lambda_N \geq (g+ c(\varepsilon)) e^f  \lambda_N
$$
in the sense of measures on $D$. 
The conclusion follows by letting $\e \rightarrow 0$.
\end{proof}

This analysis motivates the following :

  \begin{defi} \label{def: new def no Lip}
Let $u\in  \mathcal P (\Omega_T)\cap L^{\infty}_{\loc }(\Omega_T)$. Then $u$ is a pluripotential subsolution to  \eqref{eq: CMAF} if for all constants $a>0$, 
$$
(dd^c \varphi)^n \wedge dt \geq  g (a (\partial_t \varphi + F(t,z,\varphi_t(z)) - a\log a +a) \, dV(z) \wedge dt
$$
in the sense of distribution in $\Omega_T$.  
%
%
%
\end{defi}


If $u \in \mathcal{P}(\Omega_T) \cap L^{\infty}_{loc} (\Omega_T)$ is locally uniformly semi-concave in $t\in ]0,T[$, then by Lemma \ref{lem: new def} $u$ is a pluripotential subsolution to \eqref{eq: CMAF} iff
$$
(dd^c u_t)^n \geq e^{\partial_t^+ u +F(t,z,u_t)}gdV,
$$
in the sense of Radon measures in $\Omega$. Here $\partial^+_t$ is the right derivative defined pointwise in $\Omega_T$ 
(thanks to the semi-concavity property of $t\mapsto u(t,z)$).  
The above definition thus coincides with the one given in \cite{GLZ1}.

\smallskip

Decreasing limits of pluripotential subsolutions are again subsolutions as the following result shows:

\begin{lemma}\label{lem: convergence of subsolutions}
Let $(u^j)$ be a sequence of pluripotential subsolutions to \eqref{eq: CMAF} which    decreases
to $u\in \mathcal{P}(\Omega_T) \cap L^{\infty}_{\loc}(\Omega_T)$. Then $u$ is a pluripotential subsolution to \eqref{eq: CMAF}. 
\end{lemma}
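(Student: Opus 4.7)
The plan is to show that the defining inequality of Definition \ref{def: new def no Lip}, which holds for each $u^j$, passes to the limit as $j \to +\infty$ when tested against an arbitrary non-negative test function $\chi \in \mathcal{C}_c^\infty(\Omega_T)$. Fix $a>0$ and such a $\chi$, supported in some $J \times D \Subset \Omega_T$. Since $u^j \searrow u$ pointwise with $u \in L^\infty_{\loc}(\Omega_T)$, the sequence is locally uniformly bounded (from below by $u$ and from above by $u^1$, which is locally bounded above by upper semi-continuity). In particular each $u^j$ lies in $\mathcal{P}(\Omega_T)\cap L^\infty_{\loc}(\Omega_T)$, so the pluripotential subsolution inequality applied to $u^j$ reads
\[
\int_{\Omega_T} \chi \,(dd^c u^j_t)^n \wedge dt \;\geq\; -a\int_{\Omega_T} \partial_t\chi \cdot u^j \cdot g \, dt\,dV + \int_{\Omega_T} \chi \, g\bigl(a F(t,z,u^j_t) - a\log a + a\bigr) dV\wedge dt.
\]

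Next I would show that each side converges to the corresponding expression for $u$. For the right-hand side, local uniform boundedness of $(u^j)$ together with dominated convergence give $\int \partial_t\chi \cdot u^j g \to \int \partial_t\chi \cdot u g$, while continuity of $F$ combined with pointwise convergence $u^j \to u$ and boundedness yield $F(t,z,u^j_t) \to F(t,z,u_t)$ in $L^1_{\loc}$, so the $F$-term converges as well. The constant term is trivial.

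The main obstacle is the convergence of the left-hand side, for which I would proceed slicewise. For each fixed $t\in J$, $u^j_t \searrow u_t$ is a decreasing sequence of locally bounded plurisubharmonic functions, so Bedford--Taylor monotone convergence gives $(dd^c u^j_t)^n \to (dd^c u_t)^n$ weakly on $D$; hence
\[
I^j(t) := \int_\Omega \chi(t,\cdot)\,(dd^c u^j_t)^n \;\longrightarrow\; I(t) := \int_\Omega \chi(t,\cdot)\,(dd^c u_t)^n
\]
for every $t \in J$. The Chern--Levine--Nirenberg inequalities, applied with the uniform local $L^\infty$ bound on $(u^j)$ already established, yield a constant $C$ (depending only on $J$, $D$, $u$, $u^1$, $\chi$) such that $|I^j(t)| \leq C$ for all $j$ and all $t\in J$. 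Dominated convergence in $t$ then gives $\int_J I^j(t)\,dt \to \int_J I(t)\,dt$, i.e. the desired convergence of the LHS.

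Combining these convergences in the inequality above produces the pluripotential subsolution inequality for $u$ against the arbitrary non-negative test function $\chi$, for every $a>0$. Since $u \in \mathcal{P}(\Omega_T)\cap L^\infty_{\loc}(\Omega_T)$, this shows that $u$ is a pluripotential subsolution to \eqref{eq: CMAF}, completing the proof.
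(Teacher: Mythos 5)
Your proof is correct and follows essentially the same route as the paper: the paper's version simply cites \cite{BT82} for the weak convergence of $(dd^c u^j)^n\wedge dt$ and notes the distributional convergence of the right-hand side, while you supply the slicewise Bedford--Taylor convergence plus Chern--Levine--Nirenberg bounds and dominated convergence in $t$ that justify that citation. No gaps.
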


\begin{proof}
It follows from \cite{BT82}  that  the Radon measures $(dd^c u^j)^n \wedge dt$  weakly converge to 
$ (dd^c u)^n \wedge dt$. 
On the other hand
for each $a>0$
$$
g (a (\partial_t u^j + F) +a -a \log a) \rightarrow g (a (\partial_t u + F) +a -a \log a)
$$
in the weak sense of distributions
in $\Omega_T$. This completes the proof.
\end{proof}

Let us emphasize that in Definition \ref{def: new def no Lip} we do not ask subsolutions to be locally uniformly Lipschitz in $t$ while the definition given in \cite{GLZ1} does assume this regularity. We observe below that the envelopes of subsolutions in both senses do coincide. 
\begin{prop}\label{prop: identification}
	Assume that the data $(F,h,g,u_0)$ satisfy the assumption of \cite{GLZ1}. Let $U$ be the upper envelope of pluripotential subsolutions to \eqref{eq: CMAF} in the sense of Definition \ref{def: new def no Lip}, and $\tilde{U}$ be the envelope of subsolutions  to \eqref{eq: CMAF} in the sense of \cite{GLZ1}. Then $U=\tilde{U}$. 
\end{prop}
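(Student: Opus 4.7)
The inclusion $\tilde{U}\le U$ is immediate: every subsolution $u$ in the sense of \cite{GLZ1} is, by definition, locally uniformly Lipschitz in $t$ and thus locally semi-concave in $t$, so by the observation recorded after Definition~\ref{def: new def no Lip} the slicewise inequality $(dd^c u_t)^n \ge e^{\partial_t^+ u + F(t,z,u_t)}\,g\,dV$ is equivalent in that setting to the family of new-sense inequalities of Definition~\ref{def: new def no Lip}. Hence every \cite{GLZ1}-subsolution is a new-sense subsolution and competes in the envelope defining $U$.

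For the reverse inequality $U \le \tilde U$, the plan is to regularize an arbitrary new-sense subsolution into a \cite{GLZ1}-subsolution of a perturbed equation. Fix a new-sense subsolution $v$ with $v^\ast \le h$ on $\partial_0 \Omega_T$, and consider the partial sup-convolution in time:
$$
v^\e(t,z) \;:=\; \sup_{s \in (0,T)} \bigl\{ v(s,z) - \e^{-1}(s-t)^2 \bigr\}.
$$
Standard arguments (upper semicontinuity of $v$ and stability of PSH envelopes) show that $v^\e \searrow v$ as $\e \to 0$, each slice $v^\e_t$ is plurisubharmonic on $\Omega$, and $v^\e$ is semi-concave in $t$ with modulus $2/\e$, hence locally Lipschitz in $t$. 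A correction of order $o_\e(1)$ near the parabolic boundary, adapted to $h$, further arranges $v^\e \le h$ on $\partial_0 \Omega_T$.

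\textbf{Main obstacle.} The substantive step is to check that $v^\e$ itself satisfies the new-sense subsolution inequality for a perturbed nonlinearity $F_\e$ with $F_\e \to F$ uniformly. At a point $(t,z)$ where the supremum is attained at some $s_0 = s_0(t,z)\in(0,T)$, an envelope argument gives $\partial_t^+ v^\e(t,z) = 2(t-s_0)/\e$, while the plurisubharmonic function $z' \mapsto v(s_0,z') - \e^{-1}(s_0-t)^2$ touches $v^\e_t$ from below at $z$ and a touching-type argument for the Bedford--Taylor operator relates $(dd^c v^\e_t)^n$ to $(dd^c v_{s_0})^n$ near $z$. Local boundedness of $v$ forces $|s_0-t|=O(\sqrt\e)$, so the uniform continuity of $F$ converts the subsolution inequality for $v$ at $(s_0,z)$ into the corresponding inequality for $v^\e$ at $(t,z)$ up to an $o_\e(1)$ additive perturbation in $F$. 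Since $v^\e$ is Lipschitz in $t$, the remark after Definition~\ref{def: new def no Lip} identifies this perturbed inequality with the \cite{GLZ1}-notion of subsolution for data $(F_\e,g)$. Applying the \cite{GLZ1} comparison principle, together with continuous dependence of $\tilde U$ on $F$, yields $v^\e \le \tilde U + o_\e(1)$ on $\Omega_T$; letting $\e\to 0$ gives $v \le \tilde U$, and taking the envelope over all admissible $v$ completes the proof.
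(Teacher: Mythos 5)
Your easy direction is fine in substance but the justification is off: a locally Lipschitz function need not be semi-concave, so you cannot route $\tilde U\le U$ through the remark after Definition \ref{def: new def no Lip}. What actually gives this inclusion is the elementary inequality $e^x\ge ax-a\log a+a$: a \cite{GLZ1}-subsolution satisfies the exponential inequality with $\partial_t u$ defined a.e., hence all the $a$-inequalities of Definition \ref{def: new def no Lip}. The paper records this as ``by definition''.

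The genuine gap is in the reverse direction. You regularize by \emph{sup}-convolution in $t$ and claim $v^\e$ is a pluripotential subsolution of a perturbed equation via an envelope identity at the attainment point $s_0$ together with ``a touching-type argument for the Bedford--Taylor operator''. That is a viscosity-theoretic argument and it does not transfer: Bedford--Taylor Monge--Amp\`ere measures are not pointwise objects, and the fact that the psh function $z'\mapsto v(s_0,z')-\e^{-1}(s_0-t)^2$ touches $v^\e_t$ from below at $z$ yields no comparison between $(dd^c v^\e_t)^n$ and $(dd^c v_{s_0})^n$ near $z$. Worse, your starting point $v$ is a subsolution only in the space-time distributional sense of Definition \ref{def: new def no Lip}, so there is no slice-wise inequality ``at $(s_0,z)$'' to convert in the first place. (Two smaller slips: sup-convolution in $t$ produces semi-\emph{convex}, not semi-concave, functions; and the magic property of sup-convolutions is a viscosity fact --- it is precisely what the paper uses in the direction (i)$\Rightarrow$(ii) of Theorem \ref{thm: pluripotential vs viscosity subsolutions}, where one \emph{starts} from a viscosity subsolution.) The paper instead uses an \emph{averaging} convolution in time, $u^{\e}(t,z)=\int u(st,z)\chi((s-1)/\e)\,ds$: averages of psh functions are psh, the right-hand side of each $a$-inequality is affine in $\partial_t u$ so it passes through the convolution up to an error $c(\e)(t+1)$ controlled by the uniform continuity of $F$, and the left-hand side is handled by the concavity of $(\det)^{1/n}$ exactly as in \cite{GLZ1}. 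To salvage your route you would need to replace the touching step by a pluripotential statement of this kind; as written the key step is unsupported.
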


\begin{proof}
	By definition we have $\tilde{U} \leq U$. Fix $u$ a pluripotential subsolution  to \eqref{eq: CMAF} in the sense of Definition \ref{def: new def no Lip}. We regularize $u$ by taking convolution  (see \cite{GLZ1})
	$$
	u^{\varepsilon} (t,z) := \int_{\mathbb{R}} u(st, z) \chi((s-1)/\varepsilon) ds,
	$$
	where $\chi$ is a cut-off function. Then $u^{\varepsilon}-c(\varepsilon)(t+1)$ is a pluripotential subsolution to \eqref{eq: CMAF} with data $(F,h,g,u_0)$, where $c(\varepsilon) \to 0$ as $\varepsilon\to 0$. Hence $u^{\varepsilon}-O(\varepsilon)(t+1) \leq \tilde{U}$. Letting $\varepsilon\to 0$ we arrive at $u\leq \tilde{U}$, hence $U\leq \tilde{U}$.  
\end{proof}

\section{Viscosity vs pluripotential subsolutions}

\subsection{Viscosity concepts} 
 
  We now recall the corresponding viscosity notions introduced in \cite{EGZ15}.

 \begin{defi}
Given $u:\Omega_T  \rightarrow \RR$ an u.s.c. bounded function
and $(t_0,x_0) \in X_T$, $q$ is a differential test from above for $u$ at $(t_0,x_0)$ if
\begin{itemize}
\item $q \in {\mathcal C}^{1,2}$ in a small neighborhood $V_0$ of $(t_0,x_0)$;
\item $u \leq q$ in $V_0$ and $u(t_0,x_0)=q(t_0,x_0)$.
\end{itemize}
\end{defi}

\begin{defi}
An u.s.c. bounded function $u:\Omega_T \rightarrow \RR$  is a { \it viscosity subsolution} to  \eqref{eq: CMAF} if for all $(t_0,x_0) \in \Omega_T$ and all differential tests $q$ from above, 
$$
(dd^c q_{t_0}(x_0))^n \geq e^{\dot{q}_{t_0}(x_0)+  F(t_0, x_0,u(t_0,x_0))} g(x_0)dV(x_0).
$$
\end{defi}

Here are few basic facts about viscosity subsolutions:

\begin{itemize}
\item a  ${\mathcal C}^{1,2}$-smooth function is a viscosity subsolution  iff it is psh and a classical subsolution;

\item if $u_1,u_2$ are viscosity subsolutions, then so is $\max(u_1,u_2)$;

\item if $(u_\alpha)_{\alpha \in A}$ is a family of subsolutions which is locally uniformly bounded from above , then
$
\f:=\left( \sup \{ u_\alpha \setdef \alpha \in A \} \right)^*
\text{ is a subsolution};
$

\item If $u$ is a subsolution to $\eqref{eq: CMAF}_g$ then  it is also a subsolution to $\eqref{eq: CMAF}_f$
 with $g$  replaced by $f$, as long  as $0\leq f\leq g$.

\item $u$ is a subsolution to \eqref{eq: CMAF} with $g\equiv 0$ iff $u_t$ is psh for all $t$.
\end{itemize}

\begin{definition}
	A bounded l.s.c. function $u: \Omega_T \rightarrow \mathbb{R}$ is a viscosity supersolution to \eqref{eq: CMAF} if for all $(t_0,z_0) \in \Omega_T$ and all differential tests $q$ from below, 
	$$
	(dd^c q_{t_0}(x_0))_+^n \leq e^{\dot{q}_{t_0}(x_0)+  F(t_0, x_0,u(t_0,x_0))} g(x_0)dV(x_0).
	$$
	Here, for a real  $(1,1)$-form $\alpha$ we define $\alpha_+$ to be $\alpha$ if it is semipositive and $0$ otherwise. 
\end{definition}

\begin{definition}
	A function $u$ is a viscosity solution to \eqref{eq: CMAF} if it is both a viscosity subsolution and a viscosity supersolution to \eqref{eq: CMAF}.
\end{definition}

 Note in particular that viscosity solutions are continuous functions.
 
%

In viscosity theory it is convenient to define the notion of relaxed upper and lower limits of a family of functions.
Let $ \phi^\epsilon : (E,d) \to \R$, $\epsilon > 0$ be a family of locally uniformly bounded functions on a metric space $(E,d)$. We set 
\begin{eqnarray*}
\underline{\phi} (x)  =  {\liminf}_* \, \phi^\epsilon (x) & := & \liminf_{(\epsilon,y) \to (0,x)} \phi^\epsilon (y)\\
\overline{\phi} (x)  =  {\limsup}^* \, \phi^\epsilon (x) & := & \limsup_{(\epsilon,y) \to (0,x)} \phi^\epsilon (y).
\end{eqnarray*}
Observe that $\underline{\phi}$ (resp. $\overline{\phi}$) is lower (resp. upper) semi-continuous on $E$ and  
$\underline{\phi} \leq (\liminf_{\epsilon \to 0^+} \phi^\epsilon)_* $. 
 If the family is constant and equal to $\phi$, $\underline{\phi} = \phi_*$ and $\overline{\phi}= \phi^*$ correpond to the lower and upper semi-continuous regularisations of $\phi$ respectively.

\begin{lemma} \label{lem:Stability} Assume that  $(F^\epsilon)_{0 < \epsilon < \epsilon_0}$ is a family of continuous functions on $]0,T[ \times \Omega \times \R$ which converges locally uniformly to $F$, and let $(g^\epsilon)_{0 < \epsilon < \epsilon_0}$ be a family of continuous non negative functions on $\Omega$ which converges uniformly to $g$. 

 Assume that for any $0 < \epsilon < \epsilon_0$, $u^\epsilon : \Omega_T \longrightarrow \R$ is a viscosity subsolution (resp. supersolution) to the equation (\ref{eq: CMAF}) for the data $(F^\epsilon,g^\epsilon)$. Then the function $\overline{u}$ (resp. $\underline{u}$) is a viscosity subsolution (resp. supersolution) to the equation \eqref{eq: CMAF} for the data $(F,g)$.
\end{lemma}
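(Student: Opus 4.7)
The plan is to use the standard stability scheme from viscosity theory à la Crandall-Ishii-Lions; I treat the subsolution statement in detail, the supersolution case being entirely analogous. Fix $(t_0,x_0)\in \Omega_T$ and let $q$ be a $\mathcal{C}^{1,2}$ differential test from above for $\overline{u}$ at $(t_0,x_0)$. To make the contact point strict I replace $q$ by $\tilde q(t,x) := q(t,x) + \delta(|x-x_0|^2 + (t-t_0)^2)$ for some small $\delta>0$, so that $\overline{u}-\tilde q$ achieves a strict local maximum at $(t_0,x_0)$ equal to zero.

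By a classical extraction lemma in viscosity theory, the identity $\overline{u}={\limsup}^{*}\,u^{\epsilon}$ then yields sequences $\epsilon_j\to 0^{+}$ and $(t_j,x_j)\to(t_0,x_0)$ such that $u^{\epsilon_j}-\tilde q$ attains a local maximum at $(t_j,x_j)$ and $u^{\epsilon_j}(t_j,x_j)\to \overline{u}(t_0,x_0)$. After adding the constant $u^{\epsilon_j}(t_j,x_j) - \tilde q(t_j,x_j)$, $\tilde q$ becomes an admissible test from above for $u^{\epsilon_j}$ at $(t_j,x_j)$, and the viscosity subsolution property of $u^{\epsilon_j}$ for $(F^{\epsilon_j},g^{\epsilon_j})$ gives
\[
(dd^c \tilde q_{t_j}(x_j))^n \geq e^{\dot{\tilde q}_{t_j}(x_j) + F^{\epsilon_j}(t_j,x_j,u^{\epsilon_j}(t_j,x_j))}\,g^{\epsilon_j}(x_j)\,dV(x_j).
\]
Since $\tilde q\in\mathcal{C}^{1,2}$, both $(dd^c\tilde q_t)^n$ and $\dot{\tilde q}_t$ depend continuously on $(t,x)$. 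Combining the locally uniform convergence $F^{\epsilon_j}\to F$ and $g^{\epsilon_j}\to g$ with $u^{\epsilon_j}(t_j,x_j)\to\overline{u}(t_0,x_0)$, I pass to the limit $j\to\infty$ and then let $\delta\to 0$ (so that $\tilde q\to q$ in $\mathcal{C}^{1,2}_{\mathrm{loc}}$) to recover the subsolution inequality for $q$ at $(t_0,x_0)$.

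The supersolution argument is symmetric: replace ${\limsup}^{*}$ by ${\liminf}_{*}$, perturb a test $q$ from below by $-\delta(|x-x_0|^2 + (t-t_0)^2)$, and produce sequences where $u^{\epsilon_j}-\tilde q$ attains local minima. The only point meriting attention is the truncation $\alpha\mapsto\alpha_+$ in the supersolution definition: this operator is upper semi-continuous with respect to the usual ordering of real $(1,1)$-forms, which is exactly what is needed to preserve the inequality $\leq$ when passing to the limit on the left-hand side.

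The main obstacle is the extraction lemma itself. It is standard when the $u^{\epsilon}$ are continuous, but here $u^{\epsilon_j}$ is only upper semi-continuous (respectively lower semi-continuous in the supersolution case). This is handled by exploiting the \emph{strict} maximum property of $\overline{u}-\tilde q$ at $(t_0,x_0)$ combined with the definition of ${\limsup}^{*}$: on any sufficiently small closed ball around $(t_0,x_0)$ the u.s.c. function $u^{\epsilon_j}-\tilde q$ attains its maximum, and these maximizers must concentrate at $(t_0,x_0)$ as $\epsilon_j\to 0$ — otherwise one would contradict the strict maximum property of $\overline{u}-\tilde q$ via the very definition of the relaxed upper limit.
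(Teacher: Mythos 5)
Your argument is essentially the paper's: the same quadratic penalization $\pm\delta\bigl(|x-x_0|^2+(t-t_0)^2\bigr)$ of the test function, the same extraction of approximate contact points $(t_j,x_j)\to(t_0,x_0)$ with $u^{\epsilon_j}(t_j,x_j)\to\overline{u}(t_0,x_0)$ forced by the strict maximum together with the definition of ${\limsup}^*$, and the same limit passage using the locally uniform convergence of the data (the paper writes out the supersolution case and you the subsolution case, but these are dual). One small correction on your final remark: to pass to the limit in the supersolution inequality you need \emph{lower} semicontinuity of $\alpha\mapsto\alpha_+^n$ on the left-hand side, not upper semicontinuity --- with u.s.c. the inequality $\leq$ would propagate in the wrong direction; the map is in fact continuous, and the paper avoids the issue altogether by observing that the target inequality is trivial unless $dd^c q(t_0,x_0)>0$, in which case no truncation occurs near the limit.
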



The proof below is essentially classical (see \cite{DI04}) but we give a complete account for the reader's convenience. 
\begin{proof}  We prove the statement for supersolutions. The dual arguments work for subsolutions. 

Let $q$ be a lower test function for $\underline{u}$ at $\zeta_0:=(t_0,z_0) \in ]0,T[ \times \Omega$. Fix $r>0$ such that $D_r:= [t_0-r,t_0+r] \times \bar{B}(z_0,r)  \subset \Omega$. By definition there exists a sequence $(\zeta_j)_{j \in \mathbb N}$ in $D_r$ converging to $\zeta_0$ and a sequence $(\varepsilon_j)_{j \in \mathbb N}$ decreasing to $0$  such that
$\lim_{j \to + \infty} u^{\varepsilon_j} (\zeta_j) = \underline{u}(\zeta_0)$.

Fix $\delta >0$ and set 
 $$
 p(z) := q(t,z)-u^{\varepsilon_j}(t,z) - \delta( |z-z_0|^2+ (t-t_0)^2), \ z\in D_r.
 $$

 For each $j \in \mathbb N$ let $w_{j}:= (t_j,z_j)$ be a point in $D_r$ such that $p(w_{j}) = \max_{D_r} p$. We have 
 $$
 q(\zeta_j)-u^{\varepsilon_j}(\zeta_j) - \delta \vert \zeta_j - \zeta_0\vert^2 = p(\zeta_j)  \leq p(w_j) = q(w_j) -u^{\varepsilon_j} (w_j) - \delta |w_j -\zeta_0|^2.
 $$
 
Taking a subsequence if necessary we can assume that $w_j \to w_0 \in D_r$. Then letting $ j \to + \infty$ and taking into account the fact that 
$$
\liminf_{j \to + \infty} u^{\varepsilon_j} (w_j) \geq \underline{u} (w_0),
$$ 
we obtain
$$
q (\zeta_0) - \underline{u} (\zeta_0) \leq q (w_0) - \underline{u} (w_0) - \delta \vert w_0 - \zeta_0\vert^2.
$$
This implies that $\zeta_0 = w_0$, since $q$ is a lower test function for $\underline{u}$ at $\zeta_0$. Hence the sequence $(w_j)$ converges to $\zeta_0$ and then for $j $ large enough $w_j$ is in the interior of $D_r$. By definition of $w_j$,  it follows that for $j$ large enough, the function $ q_j (t,z):= q (t,z) - \delta( |z-z_0|^2+ (t-t_0)^2)$ is a lower test function for $u^{\varepsilon_j}$ at the point $w_j$. Since $u^{\varepsilon_j}$ is a supersolution to the equation (\ref{eq: CMAF}) for the data $(F^{\epsilon_j},g^{\epsilon_j})$, it follows that at the point $w_j = (t_j,z_j)$ we have
 \begin{equation}\label{eq: stability of supersolutions}
 (dd^c q - \delta \beta)_+^n \leq e^{\partial_t q (t_j,z_j)- 2 \delta (t_j-t_0) + F^{\varepsilon_j}(t_{j},z_{j},q(t_{j},z_{j}))} g^{\varepsilon_j}(z_j) dV,
 \end{equation}
 where $\beta =dd^c |z|^2$ is the standard K\"ahler form on $\mathbb{C}^n$.

 We want to prove that at $\zeta_0= (t_0,z_0)$ we have 
 $$
 (dd^c q)_+^n \leq e^{\partial_t q  + F(t_0,z_0,q(t_0,z_0))} g(z_0) dV. 
 $$
 If $dd^c q(z_0)$ has an eigenvalue $\leq 0$ then $(dd^c q)_+^n(z_0)=0$ and the inequality is trivial. If $dd^c q(z_0) > 0$ then letting $j\to +\infty$ and then $\delta\to 0$ in \eqref{eq: stability of supersolutions} we arrive at the desired inequality. 
\end{proof}

\subsection{Comparison of subsolutions}

The main result of this note provides an identification between viscosity and pluripotential subsolutions:

\begin{theorem}\label{thm: pluripotential vs viscosity subsolutions} Let $u\in \mathcal{P}(\Omega_T)\cap L^{\infty}_{\loc}(\Omega_T)$.
The following are equivalent: 
	\begin{itemize}
		\item [(i)] $u$ is a viscosity subsolution to \eqref{eq: CMAF};
		\item [(ii)] $u$ is a pluripotential subsolution to \eqref{eq: CMAF}. 
	\end{itemize}
\end{theorem}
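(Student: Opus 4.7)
I would prove the two implications by different methods: viscosity-to-pluripotential by sup-convolution regularization, and pluripotential-to-viscosity by a contradiction via the parabolic comparison principle.

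\textbf{From (i) to (ii).} I would regularize $u$ by sup-convolution in $(t,z)$,
$$u^{\epsilon}(t,z) := \sup_{(s,y)} \bigl\{ u(s,y) - \epsilon^{-2}(|z-y|^2 + (t-s)^2)\bigr\},$$
taken over a slightly shrunk cylinder. By the standard viscosity machinery (cf.\ \cite{EGZ15,DI04}), $u^{\epsilon}$ is locally Lipschitz, semi-convex in $(t,z)$, decreases pointwise to $u$, and is a viscosity subsolution of \eqref{eq: CMAF} for data $(F^{\epsilon},g^{\epsilon})$ that converges locally uniformly to $(F,g)$. By the Alexandrov theorem, $u^{\epsilon}$ is twice differentiable a.e., and at such points the viscosity inequality collapses to a pointwise inequality of the form $\det(dd^c u^{\epsilon}_t) \geq e^{\partial_t u^{\epsilon} + F^{\epsilon}(t,z,u^{\epsilon})} g^{\epsilon}$. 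Because $u^{\epsilon}$ is Lipschitz and semi-convex in space, its Bedford--Taylor operator $(dd^c u^{\epsilon}_t)^n$ is absolutely continuous and agrees with the Lebesgue-pointwise determinant, so the pointwise inequality integrates to the distributional one $(dd^c u^{\epsilon})^n \wedge dt \geq e^{\partial_t u^{\epsilon} + F^{\epsilon}(t,z,u^{\epsilon})} g^{\epsilon}\, dV \wedge dt$. Applying Lemma \ref{lem: new def} pointwise turns this into the family of inequalities indexed by $a>0$ in Definition \ref{def: new def no Lip}, showing that $u^{\epsilon}$ is a pluripotential subsolution. Lemma \ref{lem: convergence of subsolutions} then yields the conclusion by letting $\epsilon \to 0$.

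\textbf{From (ii) to (i).} I would proceed by contradiction. Let $q$ be a $\mathcal{C}^{1,2}$ test from above for $u$ at $(t_0,x_0)$, and assume
$$(dd^c q_{t_0}(x_0))^n < e^{\dot q(t_0,x_0) + F(t_0,x_0,u(t_0,x_0))} g(x_0)\, dV.$$
Replacing $q$ by $q_{\delta}(t,z) := q(t,z) + \delta(|z-x_0|^2 + (t-t_0)^2)$ and decreasing $\delta$, I may assume by continuity of $F$, $g$ and strict positivity of the gap that $q_{\delta}$ is a strict classical supersolution on a small parabolic cylinder $U = (t_0-r, t_0+r) \times B(x_0,r)$, while still satisfying $u \leq q_{\delta}$ with $u(t_0,x_0) = q_{\delta}(t_0,x_0)$. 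Since the modification turns the contact point into a strict interior maximum, $u < q_{\delta}$ on the parabolic boundary $\partial_0 U$ with a uniform gap. The pluripotential parabolic comparison principle \cite[Theorem 6.5]{GLZ1} (applied, if necessary, after first time-smoothing $u$ via the convolution used in the proof of Proposition \ref{prop: identification} to legitimately handle the Lipschitz-in-$t$ hypothesis) then forces $u \leq q_{\delta}$ throughout $U$ with the same gap, contradicting interior equality at $(t_0,x_0)$. If $F$ is not monotone in its last argument, I replace $F(t,z,u)$ by $F(t,z,q_{\delta})$ using $u \leq q_{\delta}$ and the uniform continuity of $F$ on the compact set $[t_0-r,t_0+r]\times \overline{B(x_0,r)} \times [\inf u,\sup q_{\delta}]$, at the cost of shrinking $\delta$.

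\textbf{Main obstacle.} The principal difficulty lies in the first implication: controlling the interaction between the distributional Bedford--Taylor Monge--Amp\`ere operator and the pointwise Alexandrov inequality valid a.e.\ for semi-convex $u^{\epsilon}$. One must verify that, for such Lipschitz semi-convex functions, the BT measure $(dd^c u^{\epsilon}_t)^n$ is absolutely continuous and coincides Lebesgue-a.e.\ with $\det(dd^c u^{\epsilon}_t)\,dV$, so that the pointwise viscosity inequality genuinely upgrades to a distributional inequality on the entire cylinder without being corrupted by a singular part. In the second implication, the subtlety is to match the regularity assumptions of \cite[Theorem 6.5]{GLZ1} with the weaker regularity in our Definition \ref{def: new def no Lip}, which I would handle via the time-convolution of Proposition \ref{prop: identification}.
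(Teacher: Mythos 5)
Your implication (ii) $\Rightarrow$ (i) has a genuine gap: the perturbation of the test function goes in the wrong direction. You set $q_\delta = q + \delta(|z-x_0|^2+(t-t_0)^2)$, so that $u \leq q \leq q_\delta$ everywhere with $u(t_0,x_0)=q_\delta(t_0,x_0)$, and you invoke \cite[Theorem 6.5]{GLZ1} to get ``$u\leq q_\delta$ throughout $U$ with the same gap.'' But that comparison principle only yields $\sup_U(u-q_\delta)\leq \sup_{\partial_0 U}(u-q_\delta)_+ = 0$, i.e. $u\leq q_\delta$ in $U$ --- which you already knew and which is perfectly compatible with interior equality at $(t_0,x_0)$. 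The boundary gap does not propagate inside: that would require a strong maximum principle, which is not available for this degenerate equation, and you cannot run the comparison against $q_\delta - c$ either, since subtracting a constant from a supersolution destroys the supersolution property when $F$ is increasing in $r$. The correct device (used in the paper) is to tilt the test function \emph{downward} at the contact point: on the backward cylinder $[t_0-r,t_0]\times B(z_0,r)$ one takes $v = q + \gamma(|z-z_0|^2 - r^2 + t_0 - t)$, which is still a classical supersolution for $\gamma$ small, satisfies $v\geq q\geq u$ on the parabolic boundary, but has $v(t_0,z_0)=q(t_0,z_0)-\gamma r^2 < u(t_0,z_0)$; then $u\leq v$ from the comparison principle genuinely contradicts the last inequality. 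Your idea of time-convolving $u$ first to meet the Lipschitz hypothesis of \cite[Theorem 6.5]{GLZ1} is sound and matches the paper, which additionally approximates $F$ by smooth $F_j$ and passes to the limit via the stability Lemma \ref{lem:Stability}.

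Your implication (i) $\Rightarrow$ (ii) follows a legitimate but different route from the paper's (which sup-convolves in $t$ only and then quotes the slice-wise elliptic results \cite[Proposition 3.6]{EGZ15} and \cite[Proposition 1.5]{EGZ11} rather than redoing Alexandrov). One claim there is false as stated: for a Lipschitz, semi-convex psh function the Bedford--Taylor measure need \emph{not} be absolutely continuous (already $\max(\mathrm{Re}\,z,0)$ in $\CC$ has singular Laplacian). Fortunately you only need the one-sided bound $(dd^c u^{\epsilon}_t)^n \geq \det_{\CC}(\partial^2 u^{\epsilon}_t)\,dV$ for a.e.-twice-differentiable bounded psh functions, which is exactly the content of the elliptic dictionary you are implicitly re-proving; a possible singular part only helps the inequality. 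Also note that the final step from the exponential inequality to the family of inequalities indexed by $a>0$ is the trivial direction $e^x\geq ax+a-a\log a$, not an application of Lemma \ref{lem: new def} (which goes the other way). With these repairs the first implication works; the second needs the sign fix above to be a proof.
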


The proof 
relies on corresponding results in the elliptic case, as well as on 
the parabolic comparison principle established in \cite[Theorem 6.5]{GLZ1}.

\begin{proof}  {\it We first prove $(i) \Longrightarrow (ii)$}. 
	Assume $u$ is a viscosity subsolution to \eqref{eq: CMAF}. Fix $J_1 \Subset J_2 \Subset ]0,T[$  compact subintervals.  
	We are going to prove that $u$ is a pluripotential subsolution to \eqref{eq: CMAF} in  $J_1\times \Omega$. 
	
	We regularize $u$ by taking the sup-convolution with respect to the $t$-variable: for $\varepsilon>0$ small enough we define
	$$
	u_{\varepsilon}(t,z) := \sup \left \{ u(t',z) - \frac{1}{2 \varepsilon^2}(t-t')^2  \setdef t' \in J_2\right\}. 
	$$
	The function $u_{\varepsilon}$ is semi-convex in $t \in J_1$, upper semicontinuous in $z$. 
	We claim that  
	$$
	(dd^c u_{\varepsilon})^n \geq e^{\partial_t u_{\varepsilon} + F_{\varepsilon}(t,z,u_{\varepsilon})}gdV,
	$$
	in the viscosity sense where 
	$$
	F_{\varepsilon}(t,z,r) := \inf \left \{ F(t+s,z,r) \setdef  |s| \leq C \varepsilon \right \},
	$$
	for a uniform constant $C>0$ depending on $\sup_{J_2\times \Omega} |u|$. 
The argument	 is classical but we recall it for the reader's convenience.
 Let $q$ be a differential test from above  for $u_{\varepsilon}$ at 
 $(t_0,z_0) \in J_1 \times \Omega$ and let $s_0\in J_2$ be such that 
	$$
	u_{\varepsilon}(t_0,z_0) = u(s_0,z_0)- \frac{1}{2 \varepsilon^2} (s_0-t_0)^2.
	$$ 
	Then $|t_0-s_0|\leq C \varepsilon$. Consider the function $q_{\varepsilon}$ defined by 
	$$
	q_{\varepsilon} (t,z) := q(t+t_0-s_0) + \frac{1}{2 \varepsilon^2} (s_0-t_0)^2.
 	$$
	Then $q_{\varepsilon}(s_0,z_0) = u(s_0,z_0)$, and for all $(t,z) \in J_1\times \Omega$,
	\begin{eqnarray*}
	q_{\varepsilon}(t,z) &\geq &  u_{\varepsilon}(t+t_0-s_0) +  \frac{1}{2 \varepsilon^2} (s_0-t_0)^2 \geq u(t).
	\end{eqnarray*} 
	 In other words, $q_{\varepsilon}$ is a differential test from above for $u$ at $(s_0,z_0)$. Hence 
	$$
	(dd^c q_{\varepsilon})^n (s_0,z_0) \geq e^{\partial_t q_{\varepsilon}(s_0,z_0) +F(s_0,z_0,q_{\varepsilon}(s_0,z_0))} g(z_0) dV. 
	$$
	Since $F$ is increasing in $r$ and $q_{\varepsilon}(s_0,z_0) \geq q(t_0,z_0)$ we obtain
	\begin{eqnarray*}
	(dd^c q)^n (t_0,z_0)& \geq & e^{\partial_t q(t_0,z_0) +F(s_0,z_0,q(t_0,z_0))} g(z_0) dV\\
	& \geq & e^{\partial_t q(t_0,z_0) +F_{\varepsilon}(t_0,z_0,q(t_0,z_0))} g(z_0) dV,
	\end{eqnarray*} 
	as claimed.
	
	Let $\partial^-_t u_{\varepsilon}$ denote the left derivative in $t$ of $u_{\varepsilon}$.  Since $\partial^-_t u_{\varepsilon} + F_{\varepsilon}$ is bounded, 
	by considering $u_{\varepsilon}+ \delta |z|^2$ and letting $\delta\to 0$, we can assume that $g\geq c>0$ is strictly positive in 
	$\Omega$. The function
	$$
	(t,z) \mapsto G(t,z)=e^{\partial_t^- u_{\varepsilon}(t,z) +F_{\varepsilon}(t,z,u_{\varepsilon}(t,z))}g(z),
	$$
is lower semicontinuous in $\Omega_T$. 
 It can be approximated from below by a sequence of positive continuous functions $(G_j)$. By definition of viscosity subsolutions (applied to $u_{\varepsilon}$) we have 
\begin{equation}\label{eq: pluri vs visco subsolutions}
 (dd^c u_{\varepsilon})^n \geq G_jdV
 \end{equation}
 in the parabolic viscosity sense. Since $G_j$ is continuous, we  can thus invoke  \cite[Proposition 3.6]{EGZ15} to conclude that \eqref{eq: pluri vs visco subsolutions}  holds in the elliptic viscosity sense for each $t\in J_1$ fixed. It then follows from \cite[Proposition 1.5]{EGZ11} that \eqref{eq: pluri vs visco subsolutions} holds in the elliptic pluripotential sense for each $t\in J_1$ fixed. Now, \cite[Proposition 3.2]{GLZ1} ensures that $u_{\varepsilon}$ is a parabolic pluripotential subsolution to \eqref{eq: CMAF}. Since $u_{\varepsilon}$ decreases to $u$, Lemma \ref{lem: convergence of subsolutions} insures that $u$ is a pluripotential subsolution to \eqref{eq: CMAF}.

%
%
%
%
	 \medskip
	 
	 {\it We now prove $(ii) \Longrightarrow (i)$.}
	Assume  that $u$ is a pluripotential subsolution to \eqref{eq: CMAF}. 
	 Fix $(t_0,z_0) \in \Omega_T$ and $q$ a differential test from above defined in a neighborhood $J\times U \Subset ]0,T[ \times \Omega$ of $(t_0,z_0)$. We need to prove that 
	\begin{equation}
		\label{eq:pluri is visco}
		(dd^c q)^n(t_0,z_0) \geq e^{\partial_t q(t_0,z_0) +F(t_0,z_0,q(t_0,z_0))}g(z_0)dV.
	\end{equation}
	It follows from \cite{EGZ11} that $dd^c q$ is semipositive at $(t_0,z_0)$.
	If $g(z_0)=0$  the inequality  follows from the elliptic theory (see \cite{EGZ11}). Since $g$ is continuous up to shrinking $U$, we can   assume that $g>0$ in $U$. 
	
	Assume by contradiction that \eqref{eq:pluri is visco} does not hold. Then, by continuity of the functions involved, there exists $\varepsilon,r,\delta>0$ small enough such that 
	$$
	(dd^c q + \varepsilon dd^c |z|^2)^n < e^{\partial_t q(t,z) +F(t,z,q(t,z)) - \delta}g(z) dV
	$$
	holds in the classical sense in $[t_0-r,t_0+r] \times B(z_0,r)$. Consider the function 
	$$
	v(t,z) := q(t,z) + \gamma (|z-z_0|^2-r^2+t_0-t), 
	$$
	for $(t,z) \in [t_0-r,t_0]\times B(z_0,r)$. For $\gamma$ small enough one can check that 
	\begin{flalign*}
		(dd^c v)^n & \leq  e^{\partial_t q(t,z) + F(t,z,q(t,z)) - \delta} g(z)dV \\
		& \leq e^{\partial_t v + F(t,z,v+ \gamma r^2 + \gamma(t-t_0)) + \gamma -\delta} g(z)dV\\
		& \leq e^{\partial_t v+ F(t,z,v)} g(z)dV,
	\end{flalign*}
	hence $v$ is a supersolution to \eqref{eq: CMAF} in $]t_0-r,t_0[\times B(z_0,r)$. We next compare $v$ and $u$ on the parabolic boundary of $]t_0-r,t_0[ \times B(z_0,r)$. For all $z\in B(z_0,r)$ we have
	$$
	v(t_0-r,z) \geq  q(t_0-r,z) + \gamma (r-r^2) \geq q(t_0-r,z)  \geq u(t_0-r,z), 
	$$
	if $r<1$. For all $t \in [t_0-r,t_0], \zeta \in \partial B(z_0,r)$ we have 
	$$
	v(t,\zeta) = q(t,\zeta) + \gamma(t_0-t) \geq u(t,\zeta). 
	$$
		
	If $u$ is locally uniformly Lipschitz in t, it follows from \cite[Theorem 6.5]{GLZ1} 
	that $u\leq v$ in $[t_0-r,t_0] \times B(z_0,r)$. This yields a contradiction as
	$$
	v(t_0,z_0) =q(t_0,z_0) -\gamma r^2 < u(t_0,z_0).  
	$$
	
	We finally remove the Lipschitz assumption on $u$.   For each $\varepsilon>0$ we define $u_{\varepsilon}$ by
	$$
	u_{\varepsilon} (t,z) := \int_{\mathbb{R}} u(st, z) \chi((s-1)/\varepsilon) ds,
	$$
	where $\chi$ is a cut-off function. Let $F_{j}$ be a family of smooth functions which increases to $F$. Then $u$ is a pluripotential subsolution to \eqref{eq: CMAF} with data $F_{j}$. Arguing as in \cite[Theorem 6.5]{GLZ1} we can show that $u_{\varepsilon} -c(\varepsilon)(t+1)$ is a pluripotential subsolution to \eqref{eq: CMAF} (with data $F_{j}$) which is locally uniformly Lipschitz. Hence, we can apply the first step to show that  $u_{\varepsilon} -c(\varepsilon)(t+1)$ is a viscosity subsolution to \eqref{eq: CMAF} with data $F_{j}$.  Thanks to Lemma \ref{lem:Stability} we can let $\varepsilon \to 0$ and then $j\to +\infty$ to conclude the proof. 
\end{proof}

\section{Viscosity vs pluripotential (super)solutions}

The notion of pluripotential supersolutions has been introduced in \cite{GLZ1}.  
In case $u\in \mathcal{P}(\Omega_T)\cap L^{\infty}_{\loc}(\Omega_T)$ is locally uniformly semiconcave, it is a pluripotential supersolution to \eqref{eq: CMAF} if 
$$
(dd^c u)^n \wedge dt\leq e^{\partial_t^{-} u +F(t,z,u)}gdV \wedge dt,
$$
in the sense of Radon measures in $\Omega_T$.

As in the viscosity setting, a {\it pluripotential solution} is a parabolic potential which is both a subsolution and a supersolution.

\subsection{Comparison of supersolutions}

\begin{theorem}\label{thm: pluri visco super}
	Assume $v\in \mathcal{P}(\Omega_T)\cap C(\Omega_T)$ is a pluripotential supersolution to \eqref{eq: CMAF} which
 is locally uniformly semi-concave in $t\in ]0,T[$.  Then $v$ is a viscosity supersolution to \eqref{eq: CMAF}. 
\end{theorem}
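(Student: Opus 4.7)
The plan is to mirror the direction $(ii) \Rightarrow (i)$ of Theorem \ref{thm: pluripotential vs viscosity subsolutions}, exchanging the roles of sub- and supersolutions and reaching a contradiction with the parabolic pluripotential comparison principle \cite[Theorem 6.5]{GLZ1}. Fix a lower differential test $q \in \mathcal{C}^{1,2}$ for $v$ at some $(t_0,z_0) \in \Omega_T$, so $q \leq v$ on a neighborhood and $q(t_0,z_0)=v(t_0,z_0)$. The goal is the inequality
\[
(dd^c q_{t_0}(z_0))_+^n \leq e^{\partial_t q(t_0,z_0)+F(t_0,z_0,v(t_0,z_0))}\, g(z_0)\, dV.
\]
If $dd^c q(t_0,z_0)$ fails to be strictly positive definite the left-hand side vanishes and the inequality is trivial, while the case $g(z_0)=0$ is handled by the elliptic theory or by the $\delta|z|^2$ perturbation of $v$ used in the proof of Theorem \ref{thm: pluripotential vs viscosity subsolutions}. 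Thus one may assume $g(z_0)>0$ and $dd^c q(t_0,z_0)>0$.

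The key conceptual step is to observe that at the contact point the one-sided derivative $\partial_t^- v(t_0,z_0)$ appearing in the pluripotential supersolution inequality coincides with the classical derivative $\partial_t q(t_0,z_0)$. Indeed, the touching condition $q \leq v$ with equality at $(t_0,z_0)$ gives
\[
\partial_t^- v(t_0,z_0) \leq \partial_t q(t_0,z_0) \leq \partial_t^+ v(t_0,z_0),
\]
while the semi-concavity of $t \mapsto v(t,z_0)$ gives the reverse inequality $\partial_t^- v \geq \partial_t^+ v$. Consequently $v$ is differentiable in $t$ at $(t_0,z_0)$, and the pluripotential bound at this point involves precisely the exponent $\partial_t q(t_0,z_0)$.

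Now assume, for contradiction, that the viscosity inequality fails. By continuity there exist $r,\delta>0$ such that on the cylinder $R:=[t_0-r,t_0] \times \bar B(z_0,r)$ one has the strict classical inequality $(dd^c q)^n \geq (1+\delta)\, e^{\partial_t q + F(t,z,q)}\, g\, dV$. Introduce the perturbation
\[
u'(t,z) := q(t,z) + \gamma \bigl( r^2 - |z-z_0|^2 - (t_0-t) \bigr)
\]
with $\gamma>0$ small. Since $dd^c u' = dd^c q - \gamma \beta$, where $\beta=dd^c|z|^2$, and $\partial_t u' = \partial_t q + \gamma$, for $\gamma$ sufficiently small $u'$ is a classical, hence pluripotential, subsolution of \eqref{eq: CMAF} on $R$, the margin $\delta$ absorbing the loss in the left-hand side, the extra factor $e^\gamma$, and the continuity modulus of $F$ applied to $u'-q = O(\gamma)$. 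A direct inspection shows the correction term is $\leq 0$ on the parabolic boundary $\partial_0 R$, so $u' \leq q \leq v$ there. The parabolic comparison principle \cite[Theorem 6.5]{GLZ1}, applicable since $u'$ is smooth and $v$ is continuous and locally uniformly semi-concave (hence locally Lipschitz) in $t$, then yields $u' \leq v$ throughout $R$, contradicting $u'(t_0,z_0) = v(t_0,z_0) + \gamma r^2 > v(t_0,z_0)$. The main technical obstacle is the quantitative perturbation estimate making $u'$ a strict subsolution, which requires ordering the three small corrections carefully with respect to $\delta$ but is otherwise routine.
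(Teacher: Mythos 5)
Your argument is correct and is essentially the paper's own proof: the same perturbation $q+\gamma\bigl(r^2-|z-z_0|^2-(t_0-t)\bigr)$, the same sign check on the parabolic boundary (note you need $r<1$ for the bottom face), and the same contradiction via the parabolic comparison principle of \cite[Theorem 6.5]{GLZ1} applied to this classical subsolution and the semi-concave supersolution $v$. The ``key conceptual step'' identifying $\partial_t^-v=\partial_t q=\partial_t^+v$ at the contact point is correct but is never actually used in your contradiction argument, so it could be dropped.
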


The proof relies on the parabolic pluripotential comparison principle \cite[Theorem 6.5]{GLZ1} which requires the extra 
semi-concavity hypothesis.

\begin{proof}
We can assume that $g>0$. 
	Fix $(t_0,z_0)\in \Omega_T$ and let $q$  be a 
differntial test from below	
 for $v$ at $(t_0,z_0)$, defined in $J\times U\Subset \Omega_T$. We want to prove that 
	\begin{equation}
		\label{eq:pluri is visco super}
		(dd^c q)_{+}^n(t_0,z_0) \leq e^{\partial_t q(t_0,z_0) +F(t_0,z_0,q(t_0,z_0))}g(z_0)dV.
	\end{equation}
	Assume, by contradiction, that it is not the case. Then $dd^c q_{t_0}(z_0)$ is semipositive and there is a constant $\delta>0$ such that 
	$$
	(dd^c q_{t_0}(z_0))^n > e^{\partial_t q(t_0,z_0) + F(t_0,z_0,q(t_0,z_0))+ 2\delta} g(z_0)dV(z_0). 
	$$
	Since $g>0$ and the data is continuous, we can find $r\in ]0,1[$ so small that 
	$$
	(dd^c q - \varepsilon dd^c |z|^2)^n \geq e^{\partial_t q(t,z) + F(t,z,q(t,z)) +\delta}g(z)dV(z)
	$$
	holds in the classical sense in $[t_0-r,t_0+r] \times B(z_0,r)$. Consider the function 
	$$
	u(t,z) := q(t,z) - \gamma (|z-z_0|^2-r^2+t_0-t), 
	$$
	for $(t,z) \in [t_0-r,t_0]\times B(z_0,r)$. For $\gamma$ small enough one can check that 
	\begin{flalign*}
		(dd^c u)^n & \geq  e^{\partial_t q(t,z) + F(t,z,q(t,z)) + \delta} g(z)dV \\
		& \geq e^{\partial_t u -\gamma + F(t,z,u- \gamma r^2 + \gamma(t_0-t))  -\delta} g(z)dV\\
		& \geq e^{\partial_t u+ F(t,z,u)} g(z)dV,
	\end{flalign*}
	hence $u$ is a subsolution to \eqref{eq: CMAF} in $]t_0-r,t_0[\times B(z_0,r)$. We next compare $v$ and $u$ on the parabolic boundary of $]t_0-r,t_0[ \times B(z_0,r)$. For all $z\in B(z_0,r)$ we have
	$$
	u(t_0-r,z) \leq  q(t_0-r,z) + \gamma (r^2-r) \leq q(t_0-r,z)  \leq v(t_0-r,z), 
	$$
	since $r<1$. For all $t \in [t_0-r,t_0], \zeta \in \partial B(z_0,r)$ we have 
	$$
	u(t,\zeta) = q(t,\zeta)- \gamma(t_0-t) \leq v(t,\zeta). 
	$$
	Since $v$ is locally uniformly semi-concave, we can invoke \cite[Theorem 6.5]{GLZ1} to conclude that $u\leq v$ in $[t_0-r,t_0] \times B(z_0,r)$. This yields a contradiction since
	$
	u(t_0,z_0) =q(t_0,z_0) +\gamma r^2 > v(t_0,z_0).  
	$
\end{proof}

In the reverse direction we have the following observation:

\begin{theorem}
Let $v$ be a  viscosity supersolution to \eqref{eq: CMAF} and assume that $v$ is locally uniformly semi-concave in $t\in ]0,T[$.  Then $P(v)$ is a pluripotential supersolution to \eqref{eq: CMAF}. 
\end{theorem}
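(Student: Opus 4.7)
\emph{Plan.} I will adapt the classical elliptic balayage argument to the parabolic setting. The outline is: (i) the envelope $P(v) := (\sup\{u \in \mathcal{P}(\Omega_T) \setdef u \leq v\})^*$ inherits the local uniform semi-concavity in $t$ from $v$; (ii) the Monge-Amp\`ere measure $(dd^c P(v))^n \wedge dt$ is concentrated on the contact set $\mathcal{C} := \{P(v)=v\}$; and (iii) on $\mathcal{C}$ the viscosity supersolution property of $v$ transfers to $P(v)$, which then suffices to derive the pluripotential supersolution inequality.

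I would first verify $P(v) \in \mathcal{P}(\Omega_T)$ with $P(v) \leq v$, and check that $P(v)$ inherits semi-concavity in $t$ by translating any admissible competitor $u \leq v$ in time and taking sup. For the balayage step, fix a non-contact point $(t_0,z_0)$. Lower semicontinuity of $v$ and upper semicontinuity of $P(v)$ give $v - P(v) \geq \delta > 0$ on some cylinder $J \times \bar B(z_0,r)$. Solving the Dirichlet problem slice-wise $(dd^c w_t)^n = 0$ in $B(z_0,r)$ with boundary values $P(v)_t$ on $\partial B(z_0,r)$ and extending by $P(v)$ outside gives $w \geq P(v)$; shrinking $r$ so that $\operatorname{osc}_{B(z_0,r)} P(v)_t \leq \delta/2$ yields $w \leq v$ throughout the cylinder. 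Joint upper semicontinuity of $(t,z) \mapsto w(t,z)$, which follows from the semi-concavity of $P(v)$ in $t$ together with continuity of the Perron-Bremermann envelope in its boundary data, makes $w$ an admissible competitor; maximality of $P(v)$ then forces $w \equiv P(v)$ on the cylinder, so $(dd^c P(v)_t)^n \equiv 0$ on $B(z_0,r)$ and hence $(dd^c P(v))^n \wedge dt$ vanishes there.

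On $\mathcal{C}$ the reduction is direct: any lower differential test $q$ for $P(v)$ at $(t_0,z_0) \in \mathcal{C}$ is automatically a lower test for $v$ (since $q \leq P(v) \leq v$ with equality at $(t_0,z_0)$), so the viscosity supersolution inequality for $v$ gives the corresponding inequality for $P(v)$. Hence $P(v)$ is itself a viscosity supersolution. To upgrade this pointwise viscosity inequality to the measure inequality $(dd^c P(v))^n \wedge dt \leq e^{\partial_t^- P(v) + F(t,z,P(v))} g\,dV \wedge dt$, I would inf-convolve $P(v)$ in $t$ (dually to the sup-convolution argument in the proof of Theorem~\ref{thm: pluripotential vs viscosity subsolutions}), apply the parabolic-to-elliptic viscosity reduction from \cite[Prop.~3.6]{EGZ15} slice-wise, use the elliptic viscosity-implies-pluripotential statement from \cite[Prop.~1.5]{EGZ11}, and pass to the limit via a supersolution analog of Lemma~\ref{lem: convergence of subsolutions}; the semi-concavity guarantees convergence of $\partial_t^-$ in the weak sense needed by Lemma~\ref{lem: new def}.

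The principal obstacle will be the balayage step, specifically checking that the slice-wise Dirichlet modification $(t,z) \mapsto w(t,z)$ is a genuine parabolic potential — jointly upper semicontinuous and locally integrable — so that it qualifies as a competitor in the envelope defining $P(v)$. This is precisely where the semi-concavity hypothesis on $v$ (and hence on $P(v)$) is essential, since it converts slice-wise information in $z$ into the joint regularity required to glue the modified function into the envelope.
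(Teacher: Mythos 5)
Your overall architecture (semi-concavity of $P(v)$, concentration of the Monge-Amp\`ere measure on the contact set, transfer of the supersolution inequality there) is the right philosophy, and your step (i) matches the paper's opening observation. But there is a genuine gap at the heart of the plan: the passage from ``$P(v)$ satisfies the viscosity supersolution inequality at contact points'' to the measure inequality $(dd^c P(v)_t)^n \leq e^{\partial_t^- P(v)+F}g\,dV$. The tool you invoke for this, \cite[Proposition 1.5]{EGZ11}, is a statement about \emph{subsolutions} ($(dd^c u)^n\ge f\,dV$ in the viscosity sense iff in the pluripotential sense); there is no analogous elementary ``viscosity supersolution $\Rightarrow$ pluripotential supersolution'' statement, precisely because a pointwise differential-test inequality on the (possibly thin) contact set does not control a Monge-Amp\`ere measure that may be singular with respect to Lebesgue measure. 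That implication for envelopes is exactly the main theorem of \cite{GLZ17}, which the paper applies slice-wise: for each fixed $t$ one has $(dd^c v_t)^n_+\le e^{\partial_t^- v+F(t,\cdot,v_t)}g\,dV$ in the elliptic viscosity sense (via \cite[Proposition 3.6]{EGZ15}), hence $(dd^c P(v_t))^n\le e^{\partial_t^- v+F(t,\cdot,P(v_t))}g\,dV$ in the pluripotential sense by \cite{GLZ17}; the only remaining work is to replace $\partial_t^- v$ by $\partial_t^- P(v)$, which the paper does by noting that $t\mapsto (v-P(v))(t,z)$ attains its minimum $0$ at contact points, so the two one-sided derivatives agree outside a Lebesgue-null set. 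Your outline would in effect require you to reprove the theorem of \cite{GLZ17}, and the balayage sketch you give does not accomplish that.

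Two further concrete problems. First, the balayage step as written fails: you cannot shrink $r$ to make $\operatorname{osc}_{B(z_0,r)}P(v)_t$ small, since $P(v)_t$ is merely an upper semicontinuous psh function (consider $\max(\log|z-z_0|,-1)$, whose oscillation on every small ball around $z_0$ is $1$); the standard argument controls the oscillation of $v$ instead, and this requires first reducing to continuous $v$ --- the paper does this at the outset by inf-convolving $v$ and using $P(v^\e)\uparrow P(v)$ together with a.e.\ convergence of the time derivatives. Second, your appeal to Lemma \ref{lem: new def} at the end points the wrong way: that lemma produces the \emph{lower} bound $T\ge e^f g$ needed for subsolutions, whereas a supersolution requires an upper bound; the paper instead uses the pointwise left derivative directly in the definition of pluripotential supersolution, which is legitimate only because of the semi-concavity hypothesis.
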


Here $P(v)(t,z)=P(v_t)(z)$ is the slice plurisubharmonic envelope of $v$:
for each $t$ fixed, we set
$$
P(v_t)(z):=\sup \{ w(z); \; w \leq v_t \; \text{ and } w \text{ plurisubharmonic in } \Omega \},
$$
 i.e. $P(v)_t:= P(v_t)$ is the largest psh function lying below $v_t$.

\begin{proof}
We first observe that $t \mapsto P(v)(t,z)$ is locally 
uniformly semi-concave. This follows from the fact that 
$v \mapsto P(v)$ is increasing and concave: assume for simplicity that
$t \mapsto v(t,z)$ is uniformly concave, then
$$
\frac{v_{t+s}+v_{t-s}}{2} \leq v_t \Rightarrow 
\frac{P(v_{t+s})+P(v_{t-s})}{2} \leq P\left( \frac{v_{t+s}+v_{t-s}}{2} \right)  \leq P(v_t).
$$

Fix $U\Subset \Omega$ and $S\in ]0,T[$. Let $v^{\varepsilon}$ denote the inf-convolution of $v$.
Then $v^{\varepsilon}$ increases pointwise to $v$ and $P(v^{\varepsilon}) \uparrow P(v)$ as $\varepsilon\downarrow 0$.  Since $\partial_t P(v^{\varepsilon})$ converges a.e.  to $\partial_t P(v)$ (see \cite{GLZ1}), it suffices to prove that each $P(v^{\varepsilon})$ is a pluripotential supersolution to \eqref{eq: CMAF}. 
We can thus assume that $v$ is continuous in $\Omega_T$. 

	The left derivative $\partial^-_t v$ is upper semicontinuous in $\Omega_T$. It follows from \cite[Proposition 3.6]{EGZ15} that, for all $t\in ]0,T[$, the inequality 
	$$
	(dd^c v_t)_+^n \leq e^{\partial^-_t v+ F(t,\cdot,v_t)} g dV
	$$
	holds in the viscosity sense in $\Omega$. It thus follows from \cite{GLZ17} that $P(v_t)$ satisfies 
	$$
	(dd^c P(v_t))^n \leq e^{\partial^-_t v + F(y,\cdot, P(v_t))} gdV
	$$
	in the pluripotential sense. Set 
	$$
	E =\{ (t,z) \in \Omega_T, \; 
	\partial^+_t v(t,z) = \partial^-_t v(t,z)
	\; \& \; 
	\partial^+_t P(v)(t,z) = \partial^-_t P(v)(t,z) \}.
	$$
Then $\Omega_T \setminus E$ has zero Lebesgue measure. If $(t,z) \in E \cap \{P(v_t)=v_t\}$ then  $\partial^-_t P(v)(t,z) =\partial^-_t v(t,z)$. Therefore, 
	$$
 (dd^c P(v))^n \wedge dt \leq e^{\partial_t P(v) +F(t,z,P(v))} g   dV(z) \wedge dt
	$$
	holds in the pluripotential sense in $\Omega_T$. 
\end{proof}

\subsection{Viscosity comparison principle}

The following stability estimate follows directly from the  viscosity comparison principle established in  \cite[Theorem B]{EGZ15}. 

\begin{lemma}\label{lem: VCP}
Assume $u$ is a  bounded viscosity subsolution to \eqref{eq: CMAF} with data $F$ and $v$ is a bounded viscosity supersolution to \eqref{eq: CMAF} with data $G$. Then 
$$
\sup_{\Omega_T} (u-v) \leq \sup_{\partial_0 \Omega_T} (u^*-v_{*})_+  + T \Vert (G-F)_{+}\Vert,
$$
where $\Vert (F-G)_{+}\Vert := \max_{[0,T] \times \bar{\Omega} \times [-C_0,+ C_0]} (F-G)_{+} $ and $C_0 > 0$ is a uniform bound on $\vert u\vert$ and $\vert v\vert$ in $\Omega_T$.
\end{lemma}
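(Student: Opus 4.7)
The plan is to reduce the inequality with two different data to the comparison principle of \cite[Theorem B]{EGZ15}, which compares a subsolution and a supersolution of the same equation. The reduction is achieved by a linear time-shift of $v$.

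Set $M := \Vert (G-F)_{+}\Vert$ and define $\tilde v(t,z) := v(t,z) + Mt$. The first step is to verify that $\tilde v$ is a viscosity supersolution to \eqref{eq: CMAF} with data $F$. Pick any lower test $\tilde q$ for $\tilde v$ at a point $(t_0,z_0) \in \Omega_T$. Then $q := \tilde q - Mt$ is a lower test for $v$ at the same point, and $\dot q = \dot{\tilde q} - M$, $q(t_0,z_0) = v(t_0,z_0) = \tilde v(t_0,z_0) - Mt_0$. Since $v$ is a viscosity supersolution with data $G$,
\begin{equation*}
(dd^c \tilde q_{t_0}(z_0))_+^n = (dd^c q_{t_0}(z_0))_+^n \leq e^{\dot q(t_0,z_0) + G(t_0,z_0,v(t_0,z_0))} g(z_0)\, dV.
\end{equation*}
By definition of $M$, $G(t_0,z_0,v(t_0,z_0)) \leq F(t_0,z_0,v(t_0,z_0)) + M$, and since $F$ is non-decreasing in the $r$-variable and $\tilde v \geq v$, we have $F(t_0,z_0,v(t_0,z_0)) \leq F(t_0,z_0,\tilde v(t_0,z_0))$. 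Combining these,
\begin{equation*}
(dd^c \tilde q_{t_0}(z_0))_+^n \leq e^{\dot{\tilde q}(t_0,z_0) + F(t_0,z_0,\tilde v(t_0,z_0))} g(z_0)\, dV,
\end{equation*}
which says precisely that $\tilde v$ is a viscosity supersolution to \eqref{eq: CMAF} with data $F$.

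The second step applies the viscosity comparison principle \cite[Theorem B]{EGZ15} to the subsolution $u$ and supersolution $\tilde v$, both with data $F$:
\begin{equation*}
\sup_{\Omega_T}(u - \tilde v) \;\leq\; \sup_{\partial_0 \Omega_T} (u^* - \tilde v_*)_+.
\end{equation*}
Since $\tilde v_* = v_* + Mt$ and $t \geq 0$ on $\partial_0 \Omega_T$, the boundary term is bounded by $\sup_{\partial_0 \Omega_T}(u^* - v_*)_+$. On the other hand, on $\Omega_T$, $u - \tilde v = (u - v) - Mt \geq (u - v) - MT$, so
\begin{equation*}
\sup_{\Omega_T}(u - v) \;\leq\; \sup_{\Omega_T}(u - \tilde v) + MT \;\leq\; \sup_{\partial_0 \Omega_T}(u^* - v_*)_+ + T\Vert (G-F)_+\Vert,
\end{equation*}
which is the announced estimate.

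The only subtle point is the first step, namely checking that the shift $v \mapsto v + Mt$ converts a $G$-supersolution into an $F$-supersolution. This uses both the definition of $M$ and the monotonicity of $F$ in the unknown $r$ (the standard structural hypothesis under which \cite[Theorem B]{EGZ15} is proved). Everything else is bookkeeping once the comparison principle is cited.
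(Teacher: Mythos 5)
Your proof is correct and follows essentially the same route as the paper's: a linear time-shift by $\Vert (G-F)_{+}\Vert\, t$ reduces the two-data statement to the comparison principle of \cite[Theorem B]{EGZ15}, using the monotonicity of the data in $r$ to absorb the shift. The only (immaterial) difference is that the paper shifts the subsolution $u$ downward to turn it into a $G$-subsolution, whereas you shift the supersolution $v$ upward to turn it into an $F$-supersolution --- the mirror image of the same computation.
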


\begin{proof}
Set 
$$
M_1:= \sup_{\partial_0 \Omega_T} (u^*-v_{*})_+ , \ M_2 :=  \Vert (G-F)_{+}\Vert,
$$
and $\tilde{u}:= u -M_1- M_2t$. Then $\tilde{u}^*\leq v^*$ on $\partial_0 \Omega_T$. It follows directly from the definition of viscosity subsolutions that $\tilde{u}$ is a viscosity subsolution to \eqref{eq: CMAF} with data $G$ since $F + (G-F)_+ \geq G$. It thus follows from \cite[Theorem B]{EGZ15} that $\tilde{u}\leq v$, giving the desired estimate. 

\end{proof}

\begin{coro} \label{cor: stability} Assume that $F^{j} \to F$ locally uniformly in $\Omega_T \times \R$. 
Let $h^{j}$ be a sequence of parabolic boundary data converging locally uniformly to a parabolic boundary datum $ h$ on $\partial_0 \Omega$.

Let $\phi^{j}$ be the unique viscosity solution to the Cauchy Dirichlet problem for the data $(F^{j},g,h^{j})$.
Then $(\phi^j)_{j \in \NN}$ converges locally uniformly in $\Omega_T$ to a continuous function $ \phi$ which is the unique viscosity solution to the Cauchy-Dirichlet problem of the equation (\ref{eq: CMAF}) for the data $(F,g,h)$.

\end{coro}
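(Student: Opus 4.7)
The plan is to apply the Barles--Perthame method of half-relaxed limits, combining the stability estimate of Lemma~\ref{lem: VCP} with the stability under relaxed limits of Lemma~\ref{lem:Stability}.

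First, I would establish a uniform $L^{\infty}$-bound for the family $(\phi^{j})_{j \in \NN}$. Since $(F^{j})$ and $(h^{j})$ converge locally uniformly, they are in particular locally uniformly bounded. Comparing each $\phi^{j}$ with explicit $\mathcal{C}^{1,2}$ sub- and super-barriers built from a smooth strictly plurisubharmonic defining function of $\Omega$ (via Lemma~\ref{lem: VCP}) then yields a uniform bound $\sup_{j} \|\phi^{j}\|_{L^{\infty}(\Omega_{T})} \leq C_{0}$.

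Next, I would introduce the half-relaxed limits
$$
\bar\phi := {\limsup}^{*}_{j \to \infty} \phi^{j}, \qquad \underline\phi := {\liminf}_{*,\, j \to \infty} \phi^{j},
$$
defined on $\overline{\Omega_{T}}$. They are bounded, $\bar\phi$ is u.s.c., $\underline\phi$ is l.s.c., and $\underline\phi \leq \bar\phi$. Applying Lemma~\ref{lem:Stability} to the locally uniformly convergent data $(F^{j}, g) \to (F, g)$ shows that $\bar\phi$ is a viscosity subsolution and $\underline\phi$ a viscosity supersolution of~\eqref{eq: CMAF} for the limit data $(F, g)$. For the boundary identification, standard barrier constructions at each point of $\partial_0 \Omega_{T}$ (using the smoothness of $\partial \Omega$, the continuity of the initial datum, and the uniform bounds on $(F^{j}, h^{j})$) give moduli of continuity for $\phi^{j}$ that are uniform in $j$; together with the locally uniform convergence $h^{j} \to h$, this yields $\bar\phi = \underline\phi = h$ on $\partial_0 \Omega_{T}$.

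Applying Lemma~\ref{lem: VCP} to $\bar\phi$ (subsolution) and $\underline\phi$ (supersolution) with the common data $F=G$ then gives
$$
\sup_{\Omega_{T}}(\bar\phi - \underline\phi) \leq \sup_{\partial_0 \Omega_{T}}(\bar\phi - \underline\phi)_{+} = 0,
$$
so that $\bar\phi \leq \underline\phi$; combined with the reverse inequality this forces $\bar\phi = \underline\phi =: \phi$. This common value is simultaneously u.s.c.\ and l.s.c., hence continuous, and is both a viscosity subsolution and supersolution, so it is the unique viscosity solution of the Cauchy--Dirichlet problem with data $(F, g, h)$. The identity ${\limsup}^{*} \phi^{j} = {\liminf}_{*} \phi^{j} = \phi$ with $\phi$ continuous is well known to be equivalent to the local uniform convergence $\phi^{j} \to \phi$ on $\Omega_{T}$. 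The main obstacle is the boundary identification: while interior stability under half-relaxed limits is immediate from Lemma~\ref{lem:Stability}, ensuring that the relaxed limits actually coincide with $h$ on the parabolic boundary requires uniform-in-$j$ boundary barriers; once this is granted, the rest of the argument is a mechanical application of the viscosity machinery of \cite{EGZ15}.
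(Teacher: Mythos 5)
Your overall strategy (Barles--Perthame half-relaxed limits plus the comparison principle) is a legitimate route in principle, but it leaves the hardest step unproved, and that step is precisely the one the paper's argument is designed to avoid. The gap is the boundary identification $\bar\phi=\underline\phi=h$ on $\partial_0\Omega_T$. To run it you need moduli of continuity for the $\phi^j$ near the parabolic boundary that are \emph{uniform in $j$}, and you assert that ``standard barrier constructions'' provide these. On the lateral boundary this is plausible via the strictly psh defining function, but on the initial slice $\{0\}\times\Omega$ the datum $h_0$ is only a continuous psh function, and producing an upper barrier (a supersolution touching $h_0$ from above with a $j$-independent modulus) is exactly the delicate point in the existence theory of \cite{EGZ15}; it is not something you can wave through, and without it the inequality $\sup_{\partial_0\Omega_T}(\bar\phi-\underline\phi)_+=0$ that feeds into Lemma \ref{lem: VCP} is unjustified. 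Each $\phi^j$ does equal $h^j$ on the boundary, but nothing in the hypotheses prevents its modulus of continuity there from degenerating as $j\to\infty$ unless you actually build the uniform barriers.

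The paper takes a shorter path that makes this issue disappear: apply the quantitative stability estimate of Lemma \ref{lem: VCP} directly to the \emph{pair} $(\phi^j,\phi^k)$, viewing $\phi^j$ as a subsolution for the data $F^j$ and $\phi^k$ as a supersolution for the data $F^k$ (and vice versa). This gives
$\sup_{\bar{\Omega}_S}|\phi^j-\phi^k|\le\sup_{\partial_0\Omega_S}|h^j-h^k|+S\,\Vert F^j-F^k\Vert_{\bar{\Omega}_S\times L}$,
so $(\phi^j)$ is Cauchy for the uniform norm on each $\bar{\Omega}_S$. Uniform convergence up to the boundary is then automatic, the limit $\phi$ is continuous on $[0,T[\times\bar\Omega$ with $\phi=h$ on $\partial_0\Omega_T$, Lemma \ref{lem:Stability} shows it is a solution in the interior, and uniqueness again follows from Lemma \ref{lem: VCP}. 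I recommend you replace the half-relaxed-limit scheme by this pairwise application of the stability estimate; alternatively, if you want to keep your scheme, you must actually construct the $j$-uniform barriers at every point of $\partial_0\Omega_T$, including the initial slice.
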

\begin{proof}
By the viscosity comparison principle (Lemma \ref{lem: VCP})  we have for $j, k \in \NN$, for any $0 < S < T$,
$$
\sup_{\bar{\Omega}_S} \vert \phi_j - \phi_k\vert \leq \sup_{\partial_0 \Omega_S} \vert h_j - h_k\vert + S\Vert F^j - F^k\Vert_{\bar{\Omega}_S \times L},
$$ 
where $L \subset \R$ is a compact set containing the values of $\phi^j$, $j\in \mathbb{N}$, on the compact set $\bar{\Omega}_S$.
It follows that $(\phi_j)$ is a Cauchy sequence for the norm of the uniform convergence on each $\bar{\Omega}_S$. Then the sequence has a limit which is a continuous function $\phi : [0,T[ \times \bar{\Omega}$.
By  Lemma \ref{lem:Stability}, the function  $\phi$ is a solution to the equation (\ref{eq: CMAF}) for the data $(F,g,h)$.
Set 
$$
\alpha_j := \sup_{\partial_0 \Omega_S} \vert h_j - h_k\vert + S\Vert F^j - F^k\Vert_{\bar{\Omega}_S \times L}.
$$
Then $\alpha_j \to 0$ and for $j >> 1$ we have
$$ \phi_j - \alpha_j \leq \phi \leq \phi_j + \alpha_j,
$$
in $\Omega_S$. From this inequality it follows that the boundary values of $\phi$ coincide with $h$ on $\partial_0 \Omega_S$.
Letting $S \to T$, we see that $\phi$ is the unique solution to the equation (\ref{eq: CMAF}) for the data $(F,g,h)$.
\end{proof}

\subsection{Viscosity vs pluripotential solutions}
If $h$ does not depend on $t$,  it was shown in \cite{EGZ15} that there exists a unique viscosity solution to \eqref{eq: CMAF}  with boundary value $h$. 
This  is the Perron envelope of all viscosity subsolutions with boundary value $h$.

This result has been recently extended by Do-Le-T\^o \cite{DLT19} to boundary data that are time-dependent.
Combining viscosity and pluripotential techniques we provide an alternative proof of this existence result:

%
%

\begin{theorem}\label{thm: viscosity solution is pluripotential solution}
	The Perron envelope of viscosity subsolutions to \eqref{eq: CMAF} with boundary value $h$ is the unique viscosity solution to \eqref{eq: CMAF} with boundary value $h$. It coincides with the envelope of all pluripotential subsolutions to  \eqref{eq: CMAF} with boundary value $h$.
\end{theorem}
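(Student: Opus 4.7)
The strategy is to introduce the families $\mathcal{F}_v(h)$ of bounded viscosity subsolutions to \eqref{eq: CMAF} with $u^* \leq h$ on $\partial_0 \Omega_T$, and $\mathcal{F}_p(h)$ of bounded pluripotential subsolutions $u \in \mathcal{P}(\Omega_T) \cap L^\infty_{\loc}(\Omega_T)$ satisfying the same boundary condition. Set $V := (\sup \mathcal{F}_v(h))^*$ and $P := (\sup \mathcal{F}_p(h))^*$. The plan is to first apply Theorem~\ref{thm: pluripotential vs viscosity subsolutions} to show $\mathcal{F}_v(h) = \mathcal{F}_p(h)$, so that $V = P$, and then to check that $V$ is the unique viscosity solution to \eqref{eq: CMAF} with boundary value $h$.

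For the identification of the two families, the key observation is that any viscosity subsolution $u$ of \eqref{eq: CMAF} has plurisubharmonic slices. Indeed, by the monotonicity of viscosity subsolutions in $g$ (recalled in the bullet list of Section~3.1), $u$ is also a viscosity subsolution of the equation with $g\equiv 0$, and by the last item in the same list this is equivalent to each slice $u_t$ being plurisubharmonic. Hence $u \in \mathcal{P}(\Omega_T) \cap L^\infty_{\loc}(\Omega_T)$, and Theorem~\ref{thm: pluripotential vs viscosity subsolutions} yields $u \in \mathcal{F}_p(h)$. The converse inclusion is immediate from the same theorem, and upper semicontinuous regularization preserves boundary inequalities, so $V = P$.

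It remains to check that $V$ is the unique viscosity solution with boundary datum $h$. The subsolution property of $V$ is the standard stability of viscosity subsolutions under taking upper envelopes (again recalled in Section~3.1). For the supersolution property we use Perron's bump argument: if $V_*$ violated the supersolution inequality at some interior point, one could strictly lift $V$ on a small parabolic cylinder by solving a local Cauchy--Dirichlet problem, producing a viscosity subsolution strictly greater than $V$ somewhere and contradicting the maximality of $V$. Uniqueness then follows from the viscosity comparison principle (Lemma~\ref{lem: VCP}).

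The main obstacle is verifying that $V$ attains the boundary datum $h$ on $\partial_0 \Omega_T$, which requires constructing suitable sub- and super-barriers at the initial slice $\{0\}\times \Omega$ and along the lateral boundary $[0,T[ \times \partial\Omega$, using the strict pseudoconvexity of $\Omega$ and the continuity of $h$ and $h_0$. This is exactly the construction carried out in \cite{DLT19} for time-dependent boundary data and can be invoked directly. Alternatively, one can bypass this step by working from the pluripotential side: Proposition~\ref{prop: identification} identifies $P$ with the envelope constructed in \cite{GLZ1}, which is shown there to be a pluripotential solution taking the correct boundary value $h$. Theorem~\ref{thm: pluripotential vs viscosity subsolutions} together with Theorem~\ref{thm: pluri visco super} (applied to a semi-concave time-regularization of $P$) then identifies $P$ as a viscosity solution, and uniqueness once more follows from Lemma~\ref{lem: VCP}.
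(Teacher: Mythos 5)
Your second route is essentially the paper's own proof: identify viscosity and pluripotential subsolutions via Theorem~\ref{thm: pluripotential vs viscosity subsolutions} (your preliminary observation that viscosity subsolutions have psh slices, via the $g\equiv 0$ reduction, is a worthwhile detail the paper leaves implicit), then use Proposition~\ref{prop: identification} and \cite{GLZ1} to see that the pluripotential envelope is a continuous, locally uniformly semi-concave pluripotential solution, upgrade it to a viscosity supersolution by Theorem~\ref{thm: pluri visco super}, and conclude uniqueness from Lemma~\ref{lem: VCP}. One small correction: no extra ``semi-concave time-regularization of $P$'' is needed at that stage, since the envelope produced by \cite{GLZ1} is already locally uniformly semi-concave in $t$, which is exactly the hypothesis Theorem~\ref{thm: pluri visco super} asks for.

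The genuine gap is that this route only works when $(F,h)$ satisfy the regularity assumptions of \cite{GLZ1}: Proposition~\ref{prop: identification}, and the continuity and semi-concavity of the envelope, are only available under those hypotheses, whereas the theorem is stated for merely continuous data. The paper closes this gap by approximating $(h,F)$ by admissible data $(h_j,F_j)$, observing via the quantitative comparison estimate of Lemma~\ref{lem: VCP} that the corresponding solutions $U_j$ converge uniformly with the correct boundary values, and invoking the stability results (Lemma~\ref{lem:Stability}, Corollary~\ref{cor: stability}) to pass to the limit. Your proposal omits this approximation step entirely. Your first route (Perron bump argument plus barriers) would indeed handle general continuous data, but it amounts to importing the existence result of \cite{DLT19} wholesale; since the stated purpose of this theorem is to give an alternative, pluripotential-based proof of that result, the bump-and-barrier route is the one the paper deliberately avoids, and invoking \cite{DLT19} ``directly'' for the boundary behaviour would make the argument circular as a reproof.
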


\begin{proof}
We first assume that the data $(h,F)$ satisfiy the assumptions of \cite{GLZ1}. 
	Let $U$ be the envelope of all pluripotential subsolutions to \eqref{eq: CMAF} with boundary value $h$, and $V$ be the Perron envelope of  viscosity subsolutions to  \eqref{eq: CMAF} with boundary value $h$.  Theorem \ref{thm: pluripotential vs viscosity subsolutions} ensures that $U=V$. By Proposition \ref{prop: identification} and \cite{GLZ1}, $U \in \mathcal{C}(\Omega_T)$ is a pluripotential solution to \eqref{eq: CMAF} which is locally uniformly semi-concave. It then follows from Theorem \ref{thm: pluri visco super} that $U$ is a viscosity supersolution to \eqref{eq: CMAF}, hence $U$ is a viscosity solution to \eqref{eq: CMAF}.  Lemma \ref{lem: VCP} ensures that $U$ is the unique viscosity solution to \eqref{eq: CMAF} with boundary value $h$.


	We now treat the general case. Let $(h_j,F_j)$ be approximants of $(h,F)$ which satisfy the assumptions in \cite{GLZ1}, and let $U_j$ be the envelope of pluripotential subsolutions to \eqref{eq: CMAF} with data $(h_j,F_j)$. Then $U_j$ is a pluripotential solution to \eqref{eq: CMAF} which is locally uniformly semiconcave. The previous step ensures that $U_j$ is a viscosity solution to \eqref{eq: CMAF} with data $(h_j,F_j)$. 
	By stability of viscosity solutions (see Lemma \ref{lem: VCP}), $U_j$ uniformly converges to $U$ and $U=h$ on $\partial_0 \Omega_T$. By Corollary \ref{cor: stability}, $U$ is a solution to the equation \eqref{eq: CMAF} in $\Omega_T$. Hence $U$ is a solution to the Cauchy-Dirichlet problem for \eqref{eq: CMAF} in $\Omega_T$ with boundary values $h$.
	
	Uniqueness follows from the viscosity  comparison principle in Lemma \ref{lem: VCP} (see \cite[Theorem B]{EGZ15}). 
\end{proof}

\section{Compact K\"ahler manifolds}\label{sect: compact case}

The techniques developed in the local context allow us to obtain analogous results in the compact setting. 

We consider the following complex Monge-Amp\`ere flow 
\begin{equation}\label{eq: CMAF global}
	(\omega_t +dd^c \varphi_t)^n = e^{\dot \varphi_t + F(t,x,\varphi_t)} gdV,
\end{equation}
where $X$ is a compact K\"ahler manifold of dimension $n$ and
\begin{enumerate}
	\item  $X_T:= ]0,T[\times X$ with $T>0$;
	\item $0  < g$ is a continuous function on $X$; 
	\item   $t \mapsto \omega(t,x)$ is a smooth family of  
	closed  semi-positive $(1,1)$-forms 
  such that  $ \theta(x) \leq \omega_t(x)\leq \Theta$,
where $\theta$ is a closed semi-positive big form, and $\Theta$ is a K\"ahler form;
  \item $(t,x,r) \mapsto F (t,x,r)$ is continuous in $[0,T[ \times X \times \R$, increasing  in $r$; 
  \item   $\f : [0,T[ \times X \rightarrow \R$ is the unknown function, with $\f_t: = \f (t,\cdot)$. 
   \end{enumerate}
   
   Let $\varphi_0$ be a bounded $\omega_0$-psh function on $X$ which is continuous in $\Omega$, the ample locus of $\{\theta\}$.   

\begin{definition}
The set $\mathcal{P}(X_T,\omega_t)$ of parabolic potentials consists of functions $u: X_T \rightarrow \mathbb{R}\cup \{-\infty\}$ such that 
\begin{itemize}
\item $u$ is upper semi-continuous on $X_T$ and $u\in L^1_{\loc}(X_T)$; 
\item for each $t\in ]0,T[$, the function $u_t:= u(t,\cdot)$ is $\omega_t$-psh on $X$.  
\end{itemize}
\end{definition}

\begin{definition}
A parabolic potential $u\in \mathcal{P}(X_T,\omega_t)\cap L^{\infty}(X_T)$ is a pluripotential subsolution to \eqref{eq: CMAF global} if for all constant $a>0$,   
$$
(\omega_t+ dd^c u_t)^n \wedge dt \geq  g (a (\partial_t \varphi + F(t,z,u_t(z)) - a\log a +a) \, dV(z) \wedge dt
$$
holds in the sense of distribution in $X_T$.  
\end{definition}
If $u\in \mathcal{P}(X_T,\omega_t)\cap L^{\infty}(X_T)$ is locally uniformly Lipschitz in $t$ then our definition coincides with that of \cite{GLZ2}.




\begin{theorem}
Let $U$ (respectively $V$) be the envelope of all pluripotential (respectively viscosity) subsolutions $u$ to \eqref{eq: CMAF global} such that $\limsup_{t\to 0} u_t \leq \varphi_0$. Then $U=V$ is the unique viscosity solution to \eqref{eq: CMAF global} starting from $\varphi_0$. 
\end{theorem}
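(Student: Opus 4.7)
The plan is to transcribe the road map of the local Theorem \ref{thm: viscosity solution is pluripotential solution} to the compact setting by localising in coordinate charts. In a small coordinate ball $B \subset X$ we write $\omega_t = dd^c \rho_t$ for a smooth family $\rho_t$; the substitution $\tilde u := u + \rho_t$ turns every ingredient (the $\omega_t$-plurisubharmonic condition on the slices, the Monge--Amp\`ere mass, pluripotential and viscosity tests) into its Euclidean counterpart, after absorbing the smooth correction $\partial_t \rho_t$ into the continuous data $F$. Granted this reduction, most statements of Sections 2--4 have an immediate compact analog.

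First I would prove the compact version of Theorem \ref{thm: pluripotential vs viscosity subsolutions}: for $u \in \mathcal{P}(X_T,\omega_t) \cap L^\infty(X_T)$, the notions of viscosity subsolution and pluripotential subsolution to \eqref{eq: CMAF global} coincide. The argument is the same as the one already carried out: the implication ``viscosity $\Rightarrow$ pluripotential'' uses a $t$-sup-convolution on $J_1 \Subset J_2 \Subset\, ]0,T[$ and reduces slice by slice to the elliptic equivalence of \cite{EGZ11,GLZ17} in a chart; the converse runs the barrier argument built from $v := q + \gamma(|z-z_0|^2 - r^2 + t_0 - t)$ together with the parabolic pluripotential comparison principle \cite[Theorem 6.5]{GLZ1}, applied in a chart after subtracting $\rho_t$. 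Consequently $U = V$.

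Next, assume first that $(\varphi_0,F,\omega_t)$ satisfies the slightly stronger hypotheses of \cite{GLZ2}. Then the envelope of \emph{locally uniformly Lipschitz} pluripotential subsolutions is continuous on $X_T$, locally uniformly semi-concave in $t$, attains $\varphi_0$ at $t=0$ and is a pluripotential solution of \eqref{eq: CMAF global}; the compact analog of Proposition \ref{prop: identification}, proved by the same convolution $u^\varepsilon(t,z) = \int u(st,z) \chi((s-1)/\varepsilon)\,ds$, identifies this envelope with $U$. Because $U$ is then a pluripotential supersolution that is locally uniformly semi-concave in $t$, the compact version of Theorem \ref{thm: pluri visco super} (again a chart-by-chart transcription resting on \cite[Theorem 6.5]{GLZ1}) implies that $U$ is a viscosity supersolution. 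Combined with the subsolution part, $U = V$ is a viscosity solution, and the compact viscosity comparison principle of \cite{EGZ15,DLT19} yields uniqueness.

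To remove the extra hypotheses of \cite{GLZ2} I would approximate $(\varphi_0,F)$ by smooth $(\varphi_0^j, F^j)$ for which the previous step applies, obtain viscosity solutions $U^j$, and invoke the compact analog of Corollary \ref{cor: stability} to deduce that $U^j$ converges locally uniformly to a viscosity solution $U^\infty$ with initial datum $\varphi_0$; a monotone comparison between the pluripotential envelopes gives $U^j \to U$, and hence $U = U^\infty = V$. The main obstacle will be the uniform $t$-semi-concavity of $U$ under the weaker (non-Lipschitz) definition of pluripotential subsolution used here: once the global analog of Proposition \ref{prop: identification} is in place, this regularity is inherited from \cite{GLZ2}, but verifying that the initial trace $\varphi_0$ is attained when $\varphi_0$ is only bounded and $\omega_0$-plurisubharmonic (continuous just on the ample locus of $\{\theta\}$) will require a careful barrier construction on that ample locus.
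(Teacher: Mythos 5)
Your overall road map (localise the subsolution equivalence, treat first the ``nice'' data case via the semi-concave envelope and Theorem \ref{thm: pluri visco super}, then approximate) is the same as the paper's, and the first two steps are essentially correct: the paper also gets $U=V$ by observing that being a subsolution is a local property, and also runs the \cite{GLZ2} machinery plus the compact analogue of Proposition \ref{prop: identification} and Theorem \ref{thm: pluri visco super} to show that each approximating envelope is a viscosity solution. The gap is in your final approximation step, which you dispose of with the phrase ``a monotone comparison between the pluripotential envelopes gives $U^j \to U$''. This is precisely where the real work lies, and the naive comparison fails. The forms $\omega_t$ are only semi-positive (bounded below by a semi-positive big form $\theta$), so to apply \cite{GLZ2} one must perturb them to $\omega_t^j := \omega_t + 2^{-j}\Theta$. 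Since $\omega_t \le \omega_t^j$, every $\omega$-subsolution is (up to a term $\varepsilon(j)t$) an $\omega^j$-subsolution, giving the easy bound $U - \varepsilon(j)t \le U^j$; but the reverse inequality is not a monotonicity statement, because an $\omega^j$-plurisubharmonic slice need not be $\omega$-plurisubharmonic, so $U^j$ does not compete in the envelope defining $U$. The paper resolves this with a genuinely new construction: it takes $\rho\in\PSH(X,\theta)\cap L^\infty(X)$ solving the static equation $(\theta+dd^c\rho)^n = 2^n e^{c_1} g\,dV$ (from \cite{EGZ09}) and a $\theta$-psh $\psi$ with $\theta+dd^c\psi \ge 2c_0\Theta$, forms the convex combination $W^j = (1-\lambda_j)U^j + \lambda_j(\rho+\psi)/2$ with $\lambda_j = 2^{-j}/(2^{-j}+c_0)$, and uses the mixed Monge--Amp\`ere inequality \cite[Lemma 3.15]{GLZ2} to check that $W^j - \varepsilon(j)t$ (suitably truncated from below by the explicit subsolution $\rho + nt\log t - Ct - C$ to restore boundedness) is an honest $\omega$-subsolution, whence $U^j - \varepsilon''_j \lesssim U$. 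Without this (or an equivalent) barrier argument your proof does not close.

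Two smaller points. First, you propose to approximate $\varphi_0$ by smooth $\varphi_0^j$ and invoke uniform stability of viscosity solutions; but $\varphi_0$ is only bounded and $\omega_0$-psh, continuous merely on the ample locus, so uniform approximation on $X$ is unavailable and the stability estimate of Corollary \ref{cor: stability} does not apply. The paper keeps $\varphi_0$ fixed and perturbs only $F$ and $\omega_t$, recovering the initial trace via the continuity of $\varphi_0$ on ${\rm Amp}(\{\theta\})$ and \cite[Proposition 2.2]{GLZ2}. Second, for uniqueness in this degenerate compact setting the paper relies on \cite{To19} rather than the comparison principles of \cite{EGZ15,DLT19}; in the presence of merely semi-positive big reference forms this is not a cosmetic choice.
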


The last condition in the theorem means that $\lim_{t\to 0^+} U_t =\varphi_0$  locally uniformly in $\Omega := {\rm Amp} (\{\theta\})$, the ample locus of the class 
$\{\theta\}$. 

\begin{proof}
The equivalence of pluripotential and viscosity subsolutions for a given parabolic potential $u\in \mathcal{P}(X_T)\cap L^{\infty}(X_T)$ follows from Theorem \ref{thm: pluripotential vs viscosity subsolutions},  since being a pluripotential (resp. viscosity) subsolution is a local property. It follows in particular that $U = V$ on $X_T$.

We approximate $F$ uniformly by a sequence of data $F^j$ which satisfy the assumptions in \cite{GLZ2} (one can e.g. take the convolution with a smoothing kernel in $t,r$). We approximate $\omega_t$ by $\omega_t^{j}:= \omega_t +2^{-j} \Theta$. Then $\omega^j$ also satisfies the assumptions in \cite{GLZ2}.   Let $U^j$ be the envelope of pluripotential subsolutions to \eqref{eq: CMAF} with data $(F^j,\omega^j, \varphi_0)$. By \cite{GLZ2} and the proof of Proposition \ref{prop: identification}, $U^j$ is locally uniformly semi-concave in $t$,  $\lim_{t\to 0^+} U_t^j = \varphi_0$, for all $j$, and $U^j$ is a pluripotential solution to \eqref{eq: CMAF} with data $(F^j,\omega^j)$. By continuity of $\varphi_0$ in $\Omega$ and \cite[Proposition 2.2]{GLZ2}, we infer that $U_t^j$ locally uniformly converges to $\varphi_0$ in $\Omega$. 

The proof of Theorem \ref{thm: pluri visco super} shows that $U^j$ is a viscosity solution to \eqref{eq: CMAF global} in $\Omega$. We now prove that $U^j$ locally uniformly converges to $U$ on $\Omega_T$. If we can do this then $U \in \mathcal{C}(\Omega_T)$ is a viscosity solution to \eqref{eq: CMAF global}   (thanks to Lemma \ref{lem:Stability}), and $\lim_{t\to 0^+} U_t = \varphi_0$ locally uniformly in $\Omega$.  

In the arguments below we use $\varepsilon(j)$ to denote various positive constants which tend to $0$ as $j\to +\infty$. 

Since $\omega\leq \omega^j$, the function $U- \varepsilon(j)t$ is a pluripotential subsolution to \eqref{eq: CMAF global} with datum $(F^j,\omega^j)$, hence 
\begin{equation}\label{eq: bound 1}
U-\varepsilon(j)t \leq U^j. 
\end{equation}
To obtain the other bound we fix $\rho \in \PSH(X,\theta)\cap L^{\infty}(X)$, $\sup_X \rho =0$, such that 
$$(\theta +dd^c \rho)^n= 2 ^n e^{c_1} gdV,$$ for some constant $c_1\in \mathbb{R}$. The existence of $\rho$ follows from \cite{EGZ09}. 
Let $\psi\leq 0$ be a $\theta$-psh function which is smooth in $\Omega$ and satisfies 
$$
\theta+dd^c \psi \geq 2c_0 \Theta,
$$
for some positive fixed constant $c_0$. 

Set for $j \in \mathbb N$,
 \begin{eqnarray*}
 W^j := (1-\lambda_j) U^j  + \lambda_j \frac{\rho + \psi}{2}, \, \, \, \, \text{with} \, \, \, \, \lambda_j := \frac{2^{- j}}{2^{- j} + c_0}. 
 \end{eqnarray*}
Given this choice of $\lambda_j$, a direct computation shows that 
\begin{eqnarray*}
\omega_t +dd^c W^j &\geq & (1-\lambda_j) (\omega_t + dd^c U^j_t) + \lambda_j (\omega_t + dd^c ((\rho + \psi) \slash 2)) \\
&\geq &(1-\lambda_j) (\omega_t^j + dd^c U^j_t)  + \lambda_j  (\theta +dd^c \rho)\slash 2\geq 0. 
\end{eqnarray*}
Hence, applying \cite[Lemma 3.15]{GLZ2} we obtain 
\begin{eqnarray*}
(\omega_t +dd^c W^j)^n &\geq & e^{(1-\lambda_j) (\partial_t U^j_t + F^j(t,x, U^j)) + \lambda_j c_1 } gdV\\
& \geq & e^{\partial_t W^j  + F(t,x,W^j) -\varepsilon'(j)} g dV, 
\end{eqnarray*}
in the weak sense on $\Omega$, where $\varepsilon' (j) \to 0$.

It thus follows that $W^j-\varepsilon(j)t$ is a pluripotential subsolution to the equation \eqref{eq: CMAF global} on $\Omega_T$ with datum $(F, \omega)$. 
Observe that $W^j$ is not bounded on $X$. Since, for $C$ large enough $u:=\rho + nt \log t-Ct-C$ is a bounded pluripotential subsolution  to the equation \eqref{eq: CMAF global} in $X_T$ with datum $(F, \omega)$, it follows that $\tilde{W}^j := \sup\{W^j - \varepsilon(j)t, u\}$ is a bounded subsolution to the \eqref{eq: CMAF global} on $X_T$.
Since $W^j (t,x) \leq  U^j (t,x) + \varepsilon''_j$  where $\varepsilon''_j \to 0$, and $\lim_{t \to 0} U^j (t,x) = \varphi_0 (x)$ for any $x \in X$, it follows that 
\begin{equation}\label{eq: bound 2}
\tilde{W}^j  - \varepsilon''_j \leq U, \, \, \, \text{in} \, \, \, X_T. 
\end{equation}

 From \eqref{eq: bound 1} and \eqref{eq: bound 2} we conclude that $U^j$ locally uniformly converges to $U$ on $X_T$. 

The uniqueness follows from \cite{To19}. 
\end{proof}

\end{document}